\author{{\L}ukasz Garncarek}
\title[Factoriality of Hecke-von Neumann algebras]{Factoriality of Hecke-von Neumann algebras of right-angled Coxeter groups}
\subjclass[2010]{20C08, 46L10} 
\keywords{Hecke algebra, Hecke-von Neumann algebra, von Neumann
  factor, right-angled Coxeter group}
\address{University of Wroc{\l}aw, Institute of Mathematics,
  pl.~Grunwaldzki 2/4, 50-384 Wroc\-{\l}aw, Poland}
\email{Lukasz.Garncarek@math.uni.wroc.pl} 
\thanks{Research supported by the National Science Centre grant
  DEC-2012/05/N/ST1/02829. During part of the work on this paper the
  author was also supported by a scholarship of the Foundation for
  Polish Science. Part of the work on this paper was conducted during
  the author's internship at the Warsaw Center of Mathematics and
  Computer Science.}
\newtheorem{theorem}{Theorem}[section]
\newtheorem*{maintheorem}{Main Theorem}
\newtheorem{lemma}[theorem]{Lemma}
\newtheorem{proposition}[theorem]{Proposition}
\theoremstyle{definition}
\theoremstyle{remark}
\numberwithin{equation}{section}
\newcommand{\ZZ}{\mathbb{Z}}
\newcommand{\NN}{\mathbb{N}}
\newcommand{\CC}{\mathbb{C}}
\newcommand{\norm}[1]{\left\lVert#1\right\rVert}
\newcommand{\abs}[1]{\left\lvert#1\right\rvert}
\newcommand{\B}{\mathcal{B}}
\newcommand{\Nq}{\mathcal{N}_q}
\newcommand{\isom}{\cong}
\newcommand{\fp}{\star}
\begin{document}
\maketitle

\begin{abstract}
  The Hecke algebra $\CC_q[W]$ of a Coxter group $W$, associated to
  parameter $q$, can be completed to a von Neumann algebra
  $\Nq(W)$. We study such algebras in case where $W$ is
  right-angled. We determine the range of $q$ for which $\Nq(W)$ is a
  factor, i.e.\ has trivial center. Moreover, in case of nontrivial
  center, we prove a result allowing to decompose $\Nq(W)$ into a
  finite direct sum of factors.
\end{abstract}

\section{Introduction}
\label{sec:introduction}

For $q>0$ the Hecke algebra $\CC_q[W]$ of a Coxeter group $W$ is a
deformation of the group algebra of $W$, consisting of finitely supported
functions on $W$ with a modified product, yielding the ordinary group
algebra for $q=1$. In case where $q$ is an integer, the Hecke algebra
has a nice geometric interpretation. Recall that a building of type
$W$ can be thought of as a space endowed with a $W$-valued metric
\cite[Chapter 5]{Abramenko2008}. If it is locally finite, i.e.\ for
every $w$ there are finitely many points $y$ at distance $w$ from a
fixed point $x$, then for every $w\in W$ we may consider an operator
$A_w$ on the space of finitely supported functions on the building,
which averages the functions over spheres of radius $w$. If all
spheres of radius $s$, where $s$ is any of the standard generators of
$W$, have cardinality $q$, the operators $A_w$ generate an algebra
isomorphic to $\CC_q[W]$ \cite{Parkinson2006}. For $q=1$ the averages
are taken over $1$-element sets, and are in fact translations, so one
indeed gets the group algebra.

The Hecke algebra has a natural action on the Hilbert space of
square-integrable functions on $W$, obtained by extending its action
on itself by left multiplication. It therefore completes to a von
Neumann algebra---the Hecke-von Neumann algebra $\Nq(W)$. Its
importance stems from the theory of weighted $L^2$-cohomology of
Coxeter groups \cite{Davis2007}, where the cohomology spaces
are modules over $\Nq(W)$. 

A generalization of the Singer Conjecture \cite[Conjecture
14.7]{Davis2007} deals with vanishing of certain weighted cohomology
spaces of $W$. The  motivation for this work was to try to
approach this problem using the central decomposition of $\Nq(W)$ to
better understand the structure of these spaces. As it turns
out, although the centers of $\Nq(W)$ can be nontrivial, they
contribute nothing new in the subject of decomposing the weighted
cohomology of $W$; the decompositions described in \cite[Theorem
11.1]{Davis2007} are finer than those induced by the central
decomposition of $\Nq(W)$.

Although our results are not fit for the applications we initially had
in mind, they are interesting in their own right. Namely, we show that
the Hecke-von Neumann algebras of irreducible right-angled Coxeter
groups are factors, up to a $1$-dimensional direct summand. The main
result of the paper reads as follows.

\begin{maintheorem}[Theorem {\ref{thm:center-q-const}}]
    Suppose that $(W,S)$ is an irreducible right-angled Coxeter system
  with $\abs{S}\geq 3$. Then the Hecke-von Neumann algebra $\Nq(W)$
  is a factor if and only if
  \begin{equation}
    q \in [\rho, \rho^{-1}],
  \end{equation}
  where $\rho$ is the convergence radius of $W(t)$, the spherical
  growth series of $W$. Moreover, for $q$ outside this interval,
  $\Nq(W)$ is a direct sum of a factor and $\CC$.
\end{maintheorem}

Since taking centers commutes with tensor products, and the Hecke-von
Neumann algebra $\Nq(W)$ of an arbitrary right-angled Coxeter group
$W$ is the tensor product of the Hecke-von Neumann algebras of
irreducible factors of $W$, the central decomposition of $\Nq(W)$
follows in the general case.

The paper is divided into two parts. The first part consists of
Sections~\ref{sec:coxeter-groups-1} and \ref{sec:double-cosets}, and
is purely group-theoretic. Section~\ref{sec:coxeter-groups-1}
introduces the basic notions related to Coxeter groups, and in
Section~\ref{sec:double-cosets} we analyze certain double cosets in a
right-angled Coxeter group, and use them to define the graph
$\Gamma(W,S)$, whose connectivity is related to restrictions satisfied
by elements of the center of $\Nq(W)$. We show that for irreducible
$W$, this graph consists of a single connected component and at most
two isolated vertices.  The remaining two sections constitute the
analytic part. In Section~\ref{sec:hecke-algebras-1} we define the
Hecke-von Neumann algebras as algebras of operators on the Hilbert
space $\ell^2(W)$, the symbols of its elements in $\ell^2(W)$, and the
isomorphism $j$, which allows us to restrict to the case where $q \leq
1$. Section \ref{sec:symb-elem-cent} puts the graph $\Gamma(W,S)$ to
work, and characterizes the symbols of elements in the center of
$\Nq(W)$ for irreducible $W$. It turns out that, depending on $q$, the
potential symbol of a non-trivial element in the center has either
infinite $\ell^2$-norm, or defines a multiple of a unique
one-dimensional projection. This gives the two cases of the Main
Theorem. Finally, in Section~\ref{sec:final-remarks} we describe how
our results are related to the paper~\cite{Dykema} of Dykema, dealing
with free products of certain von Neumann algebras.

We would like to thank Adam Skalski, our mentor during the internship
at the Warsaw Center of Mathematics and Computer Science, for his
support and many stimulating discussions. We are also grateful to Jan
Dymara for introducing us to the subject of Hecke-von Neumann
algebras.


\section{Coxeter groups}
\label{sec:coxeter-groups-1}

A group $W$ generated by a finite set $S$, subject to the presentation
\begin{equation}
  \label{eq:def-cox-gp}
  W=\langle S \mid (st)^{m(s,t)}=1 : s,t\in S,\, m(s,t)\ne\infty\rangle,
\end{equation}
where the exponents $m(s,t)\in \{ 1,2,\ldots, \infty\}$ satisfy
$m(s,s)=1$, $m(s,t) \geq 2$ for $s\ne t$, and $m(s,t)=m(t,s)$, is
called a \emph{Coxeter group}. Together with the fixed generating set,
the pair $(W,S)$ is called a \emph{Coxeter system}. One usually
encodes the data from the definition of a Coxeter system into a graph,
called the \emph{Coxeter diagram}, having $S$ as the vertex set, and
an edge with label $m(s,t)$ between every two vertices $s,t$ with
$m(s,t) \geq 3$ (i.e.\ the edges between commuting generators are
omitted). One also considers the \emph{odd Coxeter diagram}, which
encodes information about conjugacy of generators, and is obtained 
from the Coxeter diagram by removing all the edges with even labels,
including $\infty$. Two generators $s,t\in S$ are conjugate in $W$ if
and only if they belong to the same connected component of the odd
Coxeter diagram of $(W,S)$ \cite[Lemma 3.3.3]{Davis2008}.

It is an important observation that for any $T\subseteq S$
the subgroup of $W$ generated by $T$ is also a Coxeter group, with the
same exponents as $W$ \cite[Theorems 4.1.6(i) and
3.4.2(i)]{Davis2008}. Such a subgroup is called
\emph{special}. The group $W$ is called \emph{irreducible} if it does
not decompose into a nontrivial direct product of special
subgroups. This is equivalent to the connectedness of its Coxeter diagram.

Any Coxeter group carries a \emph{word length}, given by
\begin{equation}
  \label{eq:word-length-def}
  \abs{w} = \min\{ n : (\exists s_1,\ldots,s_n\in S)\; w=s_1\cdots s_n\}.
\end{equation}
By \cite[Corollary 4.1.5]{Davis2008}, the word length in a special
subgroup agrees with the word length in $W$.  If $\abs{w}=n$ and
$w=s_1\cdots s_n$, then the word $s_1\cdots s_n$ is called a
\emph{reduced expression} for $w$. By \cite[Proposition
4.1.1]{Davis2008}, the set of generators appearing in any reduced
expression for $w$ is independent of the choice of expression. We will
denote it by $S(w)$. With this notation, the special subgroup $W_T$
consists exactly of the elements $w\in W$ with $S(w)\subseteq T$.

The power series
\begin{equation}
  W(t)=\sum_{w\in W} t^{\abs{w}} = \sum_{n=0}^\infty \abs{\{w\in W : \abs{w}=n\}}t^n
\end{equation}
is called the \emph{spherical growth series} of $W$. Its radius of
convergence will be denoted by $\rho$. If $W$ is infinite, then $\rho
\leq 1$. Since the coefficients of $W(t)$ are non-negative real
numbers, the series diverges at $t=\rho$.

The map $w\mapsto (-1)^{\abs{w}}$ is a group homomorphism, and hence
$\abs{sw}=\abs{w}\pm 1$ and $\abs{ws}=\abs{w}\pm 1$ for $w\in W$ and
$s\in S$. Among all groups generated by elements of order $2$, Coxeter
groups are characterized by the following equivalent conditions
\cite[Theorems 3.2.16 and 3.3.4]{Davis2008}. 
\begin{theorem}
  \label{thm:folding-exchange}
  Let $(W,S)$ be a Coxeter system, $w\in W$, and $s,s_1,\ldots,s_n,t\in S$.
  \begin{description}
  \item[Deletion condition] If $w=s_1\cdots s_n$ is not a reduced word then
    there exist $1\leq i < j \leq n$ such that $w =
    s_1\cdots\hat{s}_i\cdots\hat{s}_j\cdots s_n$.
  \item[Exchange condition] If $w=s_1\cdots s_n$ is a
    reduced expression and $sw$ is not, then $sw=s_1\cdots \hat s_i\cdots s_n$ for
    some $i$; a similar statement holds with $ws$ in place of $sw$.
  \item[Folding condition] If $sw$ and $wt$ are reduced words, then either
    $swt=w$ or $swt$ is reduced.
  \end{description}
\end{theorem}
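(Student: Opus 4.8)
The plan is to build the geometric (Tits) representation of $(W,S)$, develop just enough of the associated theory of roots to obtain the Exchange condition, and then deduce the Deletion and Folding conditions by short combinatorial arguments. Throughout, the word length $\abs{\cdot}$ is the only invariant I would need to control, and the whole point is to read off changes in length from the geometry of roots.

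First I would put $V = \bigoplus_{s\in S}\RR e_s$ and equip it with the symmetric bilinear form $B$ determined by $B(e_s,e_t) = -\cos\bigl(\pi/m(s,t)\bigr)$, with the convention that this equals $-1$ when $m(s,t)=\infty$. Letting each generator act by the reflection $\sigma(s)v = v - 2B(e_s,v)e_s$, a direct computation on the plane spanned by $e_s$ and $e_t$ shows that $\sigma(s)\sigma(t)$ has order $m(s,t)$, so $\sigma$ respects the relations in \eqref{eq:def-cox-gp} and defines a homomorphism $\sigma\colon W\to GL(V)$. I would then set $\alpha_s = e_s$, form the root system $\Phi = \set{\sigma(w)\alpha_s : w\in W,\ s\in S}$, and establish the positivity dichotomy: every root is either a nonnegative combination of the $\alpha_s$ (a positive root, $\Phi^+$) or the negative of one ($\Phi^-$). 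The technical heart of the whole argument is the length--sign lemma: for $w\in W$ and $s\in S$ one has $\abs{ws} > \abs{w}$ if and only if $\sigma(w)\alpha_s\in\Phi^+$. This is proved by induction on $\abs{w}$, simultaneously with the positivity dichotomy, and it is here that faithfulness of $\sigma$ and the careful bookkeeping of signs enter; I expect this to be the main obstacle, everything afterwards being essentially formal.

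Granting the length--sign lemma, the Exchange condition comes from its strong form: if $w = s_1\cdots s_r$ and $t$ is a reflection (a conjugate of a generator) with $\abs{tw} < \abs{w}$, then $tw = s_1\cdots\hat s_i\cdots s_r$ for some $i$. One locates $i$ as the first place along the word where the positive root determined by $t$ is carried to a negative root; writing $t = s_1\cdots s_{i-1}s_is_{i-1}\cdots s_1$ and cancelling then yields exactly the deletion of the $i$-th letter. Specializing $t$ to a simple reflection $s$ and taking $w$ reduced gives the Exchange condition as stated, and the right-hand variant is symmetric.

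Finally, Deletion and Folding are purely combinatorial consequences of Exchange. For Deletion, if $w = s_1\cdots s_n$ is not reduced, choose the smallest $j$ for which $s_1\cdots s_j$ fails to be reduced; applying the (right) Exchange condition to the reduced prefix $s_1\cdots s_{j-1}$ and the letter $s_j$ produces an index $i<j$ with $s_1\cdots s_j = s_1\cdots\hat s_i\cdots s_{j-1}$, and substituting back deletes both $s_i$ and $s_j$ from $w$. For Folding, assume $sw$ and $wt$ are reduced, so that a reduced expression for $wt$ is a reduced expression $s_1\cdots s_n$ for $w$ followed by $t$. If $swt$ is not reduced, then $\abs{s\cdot wt}<\abs{wt}$, and Exchange deletes one letter of $s_1\cdots s_n t$: deleting an $s_i$ would rewrite $sw$ with fewer than $\abs{w}+1$ letters, contradicting that $sw$ is reduced, so the deleted letter must be the final $t$, giving $swt = w$. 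Thus either $swt = w$ or $swt$ is reduced, completing the proof.
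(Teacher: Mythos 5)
The paper does not prove this theorem at all: it is quoted verbatim from Davis's book (the citation to Theorems 3.2.16 and 3.3.4 of \cite{Davis2008} immediately precedes the statement), so there is no internal argument to compare yours against. Your route --- the Tits representation with $B(e_s,e_t)=-\cos(\pi/m(s,t))$, the positive/negative dichotomy for roots, the length--sign criterion $\abs{ws}>\abs{w}$ if and only if $\sigma(w)\alpha_s\in\Phi^+$, Strong Exchange located at the first index where the root of the reflection turns negative, and then Deletion and Folding as formal consequences of Exchange --- is the classical Bourbaki/Humphreys proof and is correct in outline. The two combinatorial reductions at the end are carried out properly: for Deletion, taking the minimal $j$ with $s_1\cdots s_j$ non-reduced and applying right Exchange to the reduced prefix does delete both $s_i$ and $s_j$; for Folding, your observation that deleting some $s_i$ from $s_1\cdots s_n t$ would express $sw$ in at most $n-1$ letters, contradicting $\abs{sw}=n+1$, correctly forces the deleted letter to be the terminal $t$. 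The one place where your text is a plan rather than a proof is the step you yourself flag: the simultaneous induction on $\abs{w}$ establishing the dichotomy together with the length--sign lemma, plus the verification that $\sigma(s)\sigma(t)$ has order exactly $m(s,t)$ (in the right-angled-relevant case $m(s,t)=\infty$ the form is degenerate on the $st$-plane and one checks infinite order via the unipotent action fixing the isotropic vector $e_s+e_t$). Those are standard but nontrivial, and a complete write-up must include them. For the record, Davis's own proof of the cited theorems proceeds differently, via pre-Coxeter systems and reflections/walls acting on the Cayley graph rather than via the linear representation; both approaches are legitimate, and yours is arguably the more self-contained for a reader who has not seen either.
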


A special case of a Coxeter system is a \emph{right-angled} Coxeter
system, in which the exponents $m(s,t)$ for $s\ne t$ are either $2$ or
$\infty$. They interpolate between the free and direct products of
copies of $\ZZ_2$. 
From now on, we will assume that $(W,S)$ is
right-angled. Its odd Coxeter diagram has no edges, and thus conjugate
generators are equal. This has consequences for the Deletion condition
(and the Exchange condition, which is a special case of the Deletion
condition). If we have a cancellation of the form $usvtw=uvw$ with
$u,v,w\in W$ and $s,t\in S$, then $v^{-1}sv=t$, so $s=t$ and $v$
commutes with $s$. By the next lemma, this means that $S(v)\subseteq
C(r)$, where $C(r)$ is the set of all generators commuting with
$r$. We will frequently use the following lemma, describing
centralizers of generators of a right-angled Coxeter group.

\begin{lemma}
  \label{thm:centralizer-racg}
  In a right-angled Coxeter system $(W,S)$ the centralizer of a generator
  $r\in S$ is the special subgroup $W_{C(r)}$. 
\end{lemma}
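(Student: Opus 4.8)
The plan is to prove the two inclusions $W_{C(r)}\subseteq C_W(r)$ and $C_W(r)\subseteq W_{C(r)}$ separately, where $C_W(r)$ denotes the centralizer of $r$ in $W$. The first inclusion is immediate: every generator in $C(r)$ commutes with $r$ (including $r$ itself, which lies in $C(r)$), hence so does every product of such generators; since $C(r)$ generates $W_{C(r)}$, the whole special subgroup lies in $C_W(r)$. The substance of the lemma is the reverse inclusion.

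For the reverse inclusion, recalling that $W_{C(r)}=\set{w: S(w)\subseteq C(r)}$, I would prove by induction on $\abs{w}$ that $rw=wr$ implies $S(w)\subseteq C(r)$, the base case $w=e$ being trivial. The key reduction I would use is: if $w$ admits a reduced expression $w=sw'$ whose first letter $s$ lies in $C(r)$, then $w'=sw$ again commutes with $r$, since $s$ commutes with $r$ gives $rw'=rsw=srw=swr=w'r$. By the inductive hypothesis $S(w')\subseteq C(r)$, and as $S(w)=\set{s}\cup S(w')$ for the reduced expression $w=sw'$, we conclude $S(w)\subseteq C(r)$. Note the case $s=r$ is subsumed here, as $r\in C(r)$.

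It thus suffices to show that any nontrivial $w$ commuting with $r$ has a left descent (a generator starting some reduced expression) lying in $C(r)$. Suppose not, so every left descent $s$ of $w$ satisfies $m(r,s)=\infty$. In particular $r$ is not a left descent, whence $\abs{rw}=\abs{w}+1$ and $x:=rw=wr$ is represented by the reduced word obtained by prefixing $r$ to a reduced expression for $w$; thus $r$ is a left descent of $x$. On the other hand, choosing a reduced expression $w=sw'$ with $s$ a left descent, the word $s\,w'\,r$ has $\abs{w}+1=\abs{x}$ letters and represents $x$, so it is reduced and exhibits $s$ as a left descent of $x$ as well. Hence $x$ has two distinct left descents $r$ and $s$ with $m(r,s)=\infty$.

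The main obstacle is to rule this configuration out. Here I would invoke the standard decomposition of $x$ relative to the special subgroup $W_J$ generated by $J=\set{r,s}$, which is infinite dihedral since $m(r,s)=\infty$: writing $x=u\,y$ with $u\in W_J$ and $y$ the minimal-length representative of the coset $W_J x$, so that $\abs{u'y}=\abs{u'}+\abs{y}$ for all $u'\in W_J$, one gets for $a\in\set{r,s}$ that $\abs{ax}<\abs{x}$ iff $\abs{au}<\abs{u}$. Thus the left descents of $x$ lying in $J$ are exactly the left descents of $u$ inside $W_J$. But every nontrivial element of the infinite dihedral group $W_J$ is an alternating word with a single left descent, so $u$ cannot have both $r$ and $s$ among its descents---a contradiction completing the induction. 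The coset decomposition used here is standard for special subgroups and can be derived from the Exchange condition of Theorem~\ref{thm:folding-exchange}.
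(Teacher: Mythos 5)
Your argument is correct, but it takes a genuinely different route from the paper. The paper disposes of this lemma in one line by citing Brink's structure theorem for centralizers of reflections in general Coxeter groups: the centralizer of $r$ is $W_T\rtimes F$ with $F$ the fundamental group of the connected component of the odd Coxeter diagram containing $r$; since the odd diagram of a right-angled system has no edges, $F$ is trivial and the centralizer is $W_{C(r)}$. You instead give a self-contained induction on $\abs{w}$, peeling off left descents lying in $C(r)$, and the crux of your argument is ruling out an element $x$ with two left descents $r,s$ satisfying $m(r,s)=\infty$, which you do via the standard minimal-coset-representative decomposition for the special subgroup $W_{\{r,s\}}$. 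That exclusion is exactly the standard fact that a set of generators contained in a descent set must generate a finite special subgroup (hence, in the right-angled case, must pairwise commute) --- a fact the paper itself states and uses in Section~3, derived there from the deletion condition. Your proof buys independence from the external reference at the cost of length; the paper's buys brevity at the cost of invoking a nontrivial general theorem whose full strength is not needed. Both are valid; your coset decomposition step, while only asserted to follow from the Exchange condition, is indeed standard (it is the single-coset version of the result the paper cites as \cite[Lemma 4.3.1]{Davis2008} for double cosets), so I see no gap.
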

\begin{proof}
  The main theorem of \cite{Brink1996} gives the description of the
  centralizer of a generator $r\in S$ in an arbitrary Coxeter group as
  a semidirect product $W_T\rtimes F$, where $W_T$ is a special
  subgroup, and $F$ is the fundamental group of the connected
  component of the odd Coxeter diagram of $W$ containing $r$. The odd
  Coxeter diagram of a right-angled Coxeter group has no edges, so the
  centralizer is just $W_T$. Clearly, $T=C(r)$.
\end{proof}

\section{The graph $\Gamma(W,S)$}
\label{sec:double-cosets}

Let $(W,S)$ be an infinite irreducible right-angled Coxeter system,
and let $s,t\in S$ satisfy $m(s,t)=\infty$. Denote by $D$ the special
subgroup of $W$ generated by $s$ and $t$. It is isomorphic to
$\ZZ_2*\ZZ_2$. Reduced expressions for elements of $D$ are just
alternating words in $s$ and $t$, and in particular they are
unique. 

To any $w\in W$ there corresponds a double coset of $D$, defined as
$DwD=\{ dwd' : d,d'\in D\}$. By \cite[Lemma 4.3.1]{Davis2008}, it
contains a unique element $w_0$ of minimal length, such that any $u\in
DwD$ can be written as $dw_0d'$ with $d,d'\in D$ and
$\abs{u}=\abs{d}+\abs{w_0}+\abs{d'}$.

We will call the double coset $DwD$ \emph{non-degenerate} if its
shortest element $w_0$ does not centralize $D$, i.e.\ it does not
commute with either $s$ or $t$. We are ready to define the main tool
of this paper, the graph $\Gamma(W,S)$. The vertex set of
$\Gamma(W,S)$ is $W$, and for $w\in W$ and $s\in S$ there are edges
joining $w$ to both $ws$ and $sw$, provided that there exists $t\in S$
such that $m(s,t)=\infty$ and the double coset $DwD$ of $D=\langle
s,t\rangle$ is non-degenerate.  In Section~\ref{sec:hecke-algebras-1}
we will show that each edge in this graph corresponds to a restraint
on the symbol of an operator in the center of the Hecke-von Neumann
algebra. Now we will study connectivity of $\Gamma(W,S)$. We will show
that under some mild assumptions it consists of an isolated vertex $1$
and a single connected component.

For $w\in W$ its \emph{right} and \emph{left descent sets} are defined
as
\begin{equation}
  D_R(w) = \{ s\in S : \abs{ws} < \abs{w}\} \quad \text{and} 
  \quad D_L(w)= \{ s\in S : \abs{sw} < \abs{w}\}.
\end{equation}
The right (resp.\ left) descent set of $w$ is the set of all letters
in which a reduced expression can end (resp.\ begin). By the
discussion above, the generators in $D_R(w)$ pairwise commute, and
thus if $W$ is infinite, the descent sets are proper subsets of
$S$. Observe, that $\abs{vw}=\abs{v}+\abs{w}$ if and only if
$D_R(v)\cap D_L(w)=\emptyset$. Indeed, if the intersection is
nonempty, $v$ and $w$ have reduced expressions $\mathbf{v}$ and
$\mathbf{w}$, respectively ending and starting with the same
generator, which cancels out in their concatenation. If on the other
hand a cancellation of a pair of generators occurs in $\mathbf{vw}$,
then the subword of $\mathbf{vw}$ lying between the innermost pair of
canceling generators is reduced, and all its letters commute with the
deleted generator, so in fact it can be moved to the end and lies in
both $D_R(v)$ and $D_L(w)$. Yet another useful observation is the
following. Suppose that $\abs{ws}>\abs{w}$ for some $w\in W$ and $s\in
S$. In this case, we have
\begin{equation}
  \label{eq:descent-added-gen}
  D_R(ws) = (D_R(w)\cap C(s)) \cup \{s\},
\end{equation}
since the set on the right is contained in $D_R(ws)$, and if $t\in
D_R(ws)\setminus \{s\}$, then $t$ commutes with $s$ and lies in
$D_R(w)$.

\begin{lemma}
  \label{lem:regular-join}
  Suppose that $(W,S)$ is an irreducible infinite right-angled Coxeter system,
  and let $v,w\in W$. Then there exists $u\in W$ such that
  $\abs{vuw}=\abs{v}+\abs{u}+\abs{w}$.
\end{lemma}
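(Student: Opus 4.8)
The plan is to reduce the statement to a purely combinatorial control of descent sets and then argue by induction. Recall the equivalence noted above: $\abs{ab}=\abs{a}+\abs{b}$ holds precisely when $D_R(a)\cap D_L(b)=\emptyset$. Applying this twice, the desired identity $\abs{vuw}=\abs{v}+\abs{u}+\abs{w}$ is equivalent to the conjunction of $\abs{vu}=\abs{v}+\abs{u}$ and $D_R(vu)\cap D_L(w)=\emptyset$. This already signals where the genuine difficulty lies: it is \emph{not} enough to require that $D_R(v)\cap D_L(u)=\emptyset$ and $D_R(u)\cap D_L(w)=\emptyset$, since a generator occurring in $v$ may commute across the whole of $u$ and cancel against a generator of $w$ (for instance $v=w=s$ and $u=t$ with $s,t$ commuting gives $vuw=t$). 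The object we must actually steer is $D_R(vu)$, and the strategy is to build $u$ one letter at a time, appending generators to the right of $v$ so as to drive the intersection of the running right descent set with $D_L(w)$ down to the empty set.

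Concretely, I would induct on the quantity $\abs{D_R(v)\cap D_L(w)}$. If it is zero, then $u=\one$ works by the equivalence above. Otherwise choose $b\in D_R(v)\cap D_L(w)$. Since $W$ is irreducible its Coxeter diagram is connected, and since $W$ is infinite we have $\abs{S}\geq 2$, so $b$ has a neighbour $s$ in the diagram, that is, a generator with $m(b,s)=\infty$; in particular $s\notin C(b)$. The crucial observation is that descent sets are independent sets of the Coxeter diagram: the generators of $D_R(v)$ pairwise commute, and so do those of $D_L(w)$. Hence a non-commuting neighbour $s$ of $b\in D_R(v)\cap D_L(w)$ automatically satisfies $s\notin D_R(v)$ and $s\notin D_L(w)$. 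The first of these gives $\abs{vs}>\abs{v}$, so \eqref{eq:descent-added-gen} applies and yields $D_R(vs)=(D_R(v)\cap C(s))\cup\set{s}$.

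I would then check that passing from $v$ to $vs$ strictly decreases the measure. Indeed $s\notin D_L(w)$, while $(D_R(v)\cap C(s))\cap D_L(w)\subseteq D_R(v)\cap D_L(w)$ no longer contains $b$, because $b\notin C(s)$; thus $D_R(vs)\cap D_L(w)\subseteq (D_R(v)\cap D_L(w))\setminus\set{b}$. The inductive hypothesis applied to the pair $(vs,w)$ then produces $u'$ with $\abs{vsu'w}=\abs{vs}+\abs{u'}+\abs{w}=\abs{v}+1+\abs{u'}+\abs{w}$, and I set $u=su'$. It remains only to verify that the lengths add up: from $\abs{vsu'}=\abs{vs}+\abs{u'}=\abs{v}+1+\abs{u'}$ together with the general inequalities $\abs{vsu'}\leq\abs{v}+\abs{su'}\leq\abs{v}+1+\abs{u'}$ one reads off $\abs{su'}=1+\abs{u'}$, whence $\abs{vuw}=\abs{v}+\abs{u}+\abs{w}$. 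The only real obstacle is the long-range cancellation phenomenon described in the first paragraph; it is defused entirely by the independent-set observation together with irreducibility, which guarantees that a suitable ``wall'' generator $s$ always exists.
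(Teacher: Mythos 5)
Your proof is correct. It rests on the same basic toolkit as the paper's argument --- the equivalence $\abs{ab}=\abs{a}+\abs{b}\iff D_R(a)\cap D_L(b)=\emptyset$, the update rule \eqref{eq:descent-added-gen}, the fact that descent sets in a right-angled system consist of pairwise commuting generators, and irreducibility --- but the organization is genuinely different. The paper splits into two cases: if $D_L(w)\not\subseteq D_R(v)$ it first \emph{enlarges} the intersection by appending all of $D_L(w)\setminus D_R(v)$, and then in the main case appends the entire set $S\setminus D_R(v)$, concluding via the global observation that $\bigcap_{s\notin D_R(v)}C(s)\cap D_R(v)=\emptyset$ (a nonempty intersection would give a generator commuting with everything). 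You instead run a single downward induction on $\abs{D_R(v)\cap D_L(w)}$, at each step appending one ``wall'' generator $s$ that fails to commute with a chosen $b$ in the intersection; the independent-set property of descent sets guarantees $s$ lies in neither descent set, so the intersection strictly shrinks and nothing new enters. This avoids the paper's somewhat counterintuitive enlargement step, invokes irreducibility only locally (every vertex of the connected diagram has a neighbour), and produces a shorter $u$ (of length at most $\abs{D_R(v)\cap D_L(w)}$ rather than up to $\abs{S}$); the paper's version, by contrast, gives an explicit uniform recipe for $u$ depending only on $D_R(v)$ and $D_L(w)$. Your concluding bookkeeping that $\abs{su'}=1+\abs{u'}$ follows from the triangle inequality squeeze is also correct.
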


\begin{proof}
  First, suppose that $D_L(w)\subseteq D_R(v)$. Since $W$ is infinite,
  the set $S\setminus D_R(v) = \{s_1,\ldots, s_n\}$ is nonempty. Set
  $v_k=vs_1\cdots s_k$ for $k=0,1,\ldots,n$. By
  \eqref{eq:descent-added-gen} we have $D_R(v_k) \subseteq D_R(v) \cup
  \{s_1,\ldots, s_k\}$, and thus $\abs{v_{k+1}} =
  \abs{v_k}+1$. Moreover,
  \begin{equation}
    D_R(v_{k+1})\cap D_R(v) = D_R(v_k)\cap C(s_{k+1}) \cap D_R(v) =
    \bigcap_{i=1}^{k+1}C(s_i)\cap D_R(v),
  \end{equation}
  and therefore
  \begin{equation}
    D_R(v_n)\cap D_L(w) \subseteq D_R(v_n)\cap D_R(v) = \bigcap_{s\not \in
      D_R(v)} C(s) \cap D_R(v) = \emptyset,
  \end{equation}
  since otherwise we would find a generator commuting with all the
  others, which contradicts irreducibility. Hence, we can use
  $u=s_1\cdots s_n$.

  Now, suppose that $D_L(w)\not\subseteq D_R(v)$. Let $D_L(w)\setminus
  D_R(v)=\{s_1,\ldots,s_n\}$ and denote $v_k=vs_1\cdots s_k$. As
  before, we conclude that $\abs{v_{k+1}}=\abs{v_k}+1$ and
  \begin{equation}
    D_R(v_{k+1})\cap D_L(w) = (D_R(v_k) \cap D_L(w)) \cup \{s_{k+1}\},
  \end{equation}
  since $D_L(w)\subseteq C(s_{k+1})$. Therefore, $D_L(w)\subseteq
  D_R(v_n)$, and $\abs{v_n}=\abs{v}+n$. We may now apply the first
  case with $v_n$ in place of $v$ to obtain the conclusion.
\end{proof}

\begin{lemma}
  \label{lem:all-gens-gamma}
  For an irreducible infinite right-angled Coxeter system $(W,S)$ with
  $\abs{S} \geq 3$ all elements $w\in W$ with $S(w)=S$ lie in the same
  component of\, $\Gamma(W,S)$.
\end{lemma}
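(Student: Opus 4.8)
The plan is to reduce everything to a single observation: for every $x\in W$ with $S(x)=S$ and every generator $s\in S$, the graph $\Gamma(W,S)$ contains the edges joining $x$ to $xs$ and to $sx$. Granting this, I connect two arbitrary full-support elements $v,w$ as follows. By Lemma~\ref{lem:regular-join} there is $u\in W$ with $\abs{vuw}=\abs{v}+\abs{u}+\abs{w}$, so the concatenation of reduced expressions for $v$, $u$, $w$ is reduced, and in particular its sub-words $uw$ and $vu$ are reduced. Writing $uw=c_1\cdots c_r$ reduced, the sequence $v,\,vc_1,\,vc_1c_2,\ldots,vuw$ is a path whose vertices are all prefixes of the reduced word $vuw$, hence reduced and of support containing $S(v)=S$; each step is right multiplication by a generator at a full-support vertex, so it is an edge by the observation. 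Symmetrically, prepending the letters of the reduced word $vu$ to $w$ yields a path from $w$ to $vuw$ all of whose vertices contain $w$ as a reduced suffix and therefore have full support. Thus $v$ and $w$ both lie in the component of $vuw$, and since they were arbitrary, all elements with full support share one component.

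It remains to prove the observation, which is the heart of the matter. Fix $x$ with $S(x)=S$ and $s\in S$; by the definition of $\Gamma(W,S)$ I must produce $t\in S$ with $m(s,t)=\infty$ for which the double coset $DxD$, with $D=\langle s,t\rangle$, is non-degenerate. Since $W$ is irreducible and $\abs{S}\geq 2$, the generator $s$ cannot commute with all the others---otherwise $\set{s}$ would split off as a direct factor---so at least one $t$ with $m(s,t)=\infty$ exists, and I claim that for any such $t$ the coset $DxD$ is automatically non-degenerate. Suppose not, and let $w_0$ be the shortest element of $DxD$. Degeneracy means $w_0$ centralizes $D$, i.e.\ commutes with both $s$ and $t$, which by Lemma~\ref{thm:centralizer-racg} says $S(w_0)\subseteq C(s)\cap C(t)$. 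As $w_0$ commutes with $D$ we have $DxD=w_0D$, so $x\in w_0D$ and hence $S(x)\subseteq S(w_0)\cup\set{s,t}\subseteq (C(s)\cap C(t))\cup\set{s,t}$.

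Now the hypothesis $S(x)=S$ is decisive: every generator $r\neq s,t$ then lies in $C(s)\cap C(t)$, so $s$ commutes with every element of $S\setminus\set{t}$ and $t$ commutes with every element of $S\setminus\set{s}$. In terms of the Coxeter diagram, the only edge incident to $s$ or to $t$ joins these two vertices to each other, so $\set{s,t}$ is a connected component; but $\abs{S}\geq 3$ forces further generators outside it, contradicting irreducibility. Hence $DxD$ is non-degenerate and the observation holds. I expect this last step to be the main obstacle: the delicate point is translating the failure of non-degeneracy into the containment $S(x)\subseteq (C(s)\cap C(t))\cup\set{s,t}$ through the centralizer description, and then extracting a contradiction from irreducibility together with $\abs{S}\geq 3$. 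Everything else is routine bookkeeping with reduced words.
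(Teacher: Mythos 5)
Your proof is correct and follows essentially the same route as the paper's: both use Lemma~\ref{lem:regular-join} to join $v$ and $w$ through $vuw$ by single-generator steps, and both verify each step is an edge of $\Gamma(W,S)$ by noting that the intermediate vertices have full support, so the shortest element of the relevant double coset still contains all of $S\setminus\{s,t\}$, which by irreducibility cannot centralize $D$. Your isolation of the key observation (any $t$ with $m(s,t)=\infty$ works at a full-support vertex) and the detour through $DxD=w_0D$ are only cosmetic variations on the paper's argument.
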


\begin{proof}
  Let $v,w \in W$ satisfy $S(v)=S(w)=S$. Denote their reduced
  expressions by $\mathbf{v}$ and $\mathbf{w}$. By
  Lemma~\ref{lem:regular-join} there exists $u\in W$ with reduced
  expression $\mathbf{u}$ such that $\mathbf{vuw}$ is reduced. Let
  $\mathbf{uw}=s_1\cdots s_n$ with $s_i\in S$, and define
  $v_i=vs_1\cdots s_i$. We have $S(v_i)=S$, and by irreducibility,
  there exists a generator $t\in S$ not commuting with
  $s_{i+1}$. Moreover, for $D=\langle s_{i+1},t\rangle$ the double
  coset $Dv_iD$ is non-degenerate. Indeed, its shortest element
  contains all generators from $S\setminus\{s_{i+1},t\} \ne
  \emptyset$, and by irreducibility they can not all commute with
  $D$. Thus $v_i$ and $v_{i+1}$ are connected by an edge in
  $\Gamma(W,S)$. The same holds for the analogous path from $w$ to
  $vuw$. Therefore $v$ and $w$ lie in the same component.
\end{proof}

\begin{lemma}
  \label{lem:not-all-gens-path}
  Suppose that $(W,S)$ is an irreducible infinite right-angled Coxeter
  system, $w\in W\setminus\{1\}$ and $S(w)\ne S$. Then either
  \begin{enumerate}
  \item $W\cong \ZZ_2 * \ZZ_2^k$ and $w$ is the generator of the
    $\ZZ_2$ free factor, or
  \item there exists $s\in S\setminus S(w)$ such that $w$ is connected
    by an edge in $\Gamma(W,S)$ with $ws$ and $sw$.
  \end{enumerate}
\end{lemma}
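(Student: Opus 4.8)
The plan is to pick a generator $s\in S\setminus S(w)$ and exhibit a companion $t$ with $m(s,t)=\infty$ for which the double coset $DwD$, $D=\langle s,t\rangle$, is non-degenerate; producing even one such pair lands us in case (2). Everything rests on controlling the minimal element $w_0$ of $DwD$. First I would observe that since $s\notin S(w)$ we have $s\notin D_L(w)\cup D_R(w)$, so in reducing $w$ to $w_0$ by left and right multiplication by elements of the infinite dihedral group $D$ one can only ever strip the letter $t$; and because reduced words in $D$ alternate between $s$ and $t$, at most one $t$ can be peeled off from each side. Hence $w_0$ is $w$ with at most one leading and one trailing $t$ deleted, which yields the key containment $S(w)\setminus\{t\}\subseteq S(w_0)\subseteq S(w)$. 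By Lemma~\ref{thm:centralizer-racg}, $DwD$ is degenerate exactly when $S(w_0)\subseteq C(s)\cap C(t)$, so it suffices to make $S(w_0)$ contain a generator failing to commute with $s$ or with $t$.

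Write $A=S(w)$ and $B=S\setminus A$, both nonempty since $w\ne 1$ and $S(w)\ne S$, and let $\mathcal{G}$ be the graph on $S$ whose edges are the pairs with $m(s,t)=\infty$, so that irreducibility means $\mathcal{G}$ is connected. The central step is a sufficient condition for case (2): if some $s\in B$ has degree at least $2$ in $\mathcal{G}$ and at least one neighbour $a\in A$, then choosing a neighbour $t\ne a$ of $s$ keeps $a\in S(w)\setminus\{t\}\subseteq S(w_0)$, and $a\notin C(s)$ makes $DwD$ non-degenerate. So I may assume no such $s$ exists. Then every $s\in B$ adjacent to $A$ has degree exactly $1$, i.e.\ is a pendant attached to $A$; and a short connectivity argument rules out $B$-vertices with no neighbour in $A$, since such a vertex would have all its neighbours in $B$, none of them the pendants, forcing its whole component to avoid the nonempty set $A$. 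Thus every vertex of $B$ is a pendant whose unique neighbour lies in $A$.

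For such a pendant $s=b\in B$ with neighbour $a\in A$, the only admissible $t$ is $a$, and as $b$ commutes with everything but $a$ we have $C(b)\cap C(a)=C(a)\setminus\{a\}$. Failure of case (2) then forces $S(w_0)\subseteq C(a)\setminus\{a\}$, which combined with $A\setminus\{a\}\subseteq S(w_0)$ gives $A\setminus\{a\}\subseteq C(a)$: the vertex $a$ has no neighbour inside $A$. Applying this to every pendant and invoking connectedness of $\mathcal{G}$ once more collapses the graph to a single star, with $A$ the one vertex $\{r\}$ and $B$ consisting of pairwise commuting generators each joined only to $r$. Hence $W\cong\ZZ_2*\ZZ_2^k$, and since $S(w)=\{r\}$ with $w\ne 1$ we get $w=r$, which is case (1).

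I expect the main obstacle to be the final synthesis rather than any single computation: extracting, from the bare assumption that \emph{every} admissible double coset is degenerate, the rigid graph-theoretic constraints (pendant $B$-vertices, and $A$-neighbours of pendants having no internal $A$-edges), and then using irreducibility to force the exceptional $\ZZ_2*\ZZ_2^k$ picture. The control of $S(w_0)$ via $s\notin S(w)$ is the enabling step, but the care lies in checking that degeneracy for all permitted $t$, not merely a convenient one, is what pins the group down.
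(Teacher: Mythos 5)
Your proof is correct, but it is organized quite differently from the paper's. The paper splits into three cases according to whether $\langle S\setminus S(w)\rangle$ or $\langle S(w)\rangle$ is infinite (or neither), and in each case either produces a non-degenerate double coset or derives a decomposition of $S$ into two commuting subsets, contradicting irreducibility; the exceptional group $\ZZ_2*\ZZ_2^k$ only surfaces in the third case via the function $N(s)$. You instead run a single sweep over the Coxeter diagram $\mathcal{G}$: the two enabling facts are the same as the paper's (namely that $s\notin S(w)$ forces $S(w)\setminus\{t\}\subseteq S(w_0)$, and that degeneracy is equivalent to $S(w_0)\subseteq C(s)\cap C(t)$ via Lemma~\ref{thm:centralizer-racg}), but you classify the vertices of $B=S\setminus S(w)$ by degree, show that failure of case~(2) forces every $B$-vertex to be a pendant hanging off $A=S(w)$ (your connectivity argument replacing the paper's Case~1 entirely), and then let the star shape of $\ZZ_2*\ZZ_2^k$ emerge directly from connectedness. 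I checked the individual steps: the containment $S(w)\setminus\{t\}\subseteq S(w_0)$ is correctly justified by $s\notin S(d)\cup S(d')$ in the length-additive factorization $w=dw_0d'$; the central step, the exclusion of $B$-vertices with no $A$-neighbour, and the final collapse all go through (including the $|S|=2$ and $w_0=1$ edge cases). Your version is arguably more streamlined and produces explicit witnesses for non-degeneracy throughout, whereas the paper's case split is guided by group-theoretic (finiteness) rather than graph-theoretic data; both are elementary and of comparable length.
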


\begin{proof}
  Suppose that situation (1) does not hold. We need to find $s\in
  S\setminus S(w)$ and $t\in S \setminus \{s\}$ such that
  $D=\langle s,t\rangle$ is infinite and the double coset $DwD$ is
  non-degenerate. We will consider three cases.

  \textbf{Case 1. The subgroup generated by $S\setminus S(w)$ is
    infinite.} We may thus find $s,t\in S\setminus S(w)$ generating
  infinite $D$. Since $s,t\not\in S(w)$, the shortest element in $DwD$
  is $w$. Assume that for all choices of $s$ and $t$ the double coset
  $DwD$ is degenerate, so that both $s$ and $t$ commute with all
  generators in $S(w)$.

  Denote by $T$ the subset of $S\setminus S(w)$ consisting of all
  generators not commuting with all of $S\setminus S(w)$. It is
  nonempty since $\langle S\setminus S(w)\rangle$ is infinite. By
  assumption, every $s\in T$ commutes with $S(w)$, and, by definition
  of $T$, with $S \setminus (S(w)\cup T)$. This gives a decomposition
  of $S$ into two non-empty commuting subsets $T$ and $S\setminus T$,
  which contradicts irreducibility of $(W,S)$.

  \textbf{Case 2. $S(w)$ generates an infinite subgroup of
  $W$.} It follows that there exist non-commuting $s,r\in S(w)$. If we
  can find $t\in S\setminus S(w)$ such that $D=\langle s, t\rangle$ is
  infinite, then the double coset $DwD$ is non-degenerate. Indeed, its
  shortest element $w_0$ is obtained from $w$ by canceling some
  occurrences of $s$ and $t$, and we still have $r\in S(w_0)$. Thus,
  $s$ does not commute with $w_0$.

  But if each $s\in S(w)$ which does not commute with all of $S(w)$,
  commutes with $S\setminus S(w)$, we get a contradiction similar to
  the one in Case 1.

  \textbf{Case 3. Both $S\setminus S(w)$ and $S(w)$ consist of
    pairwise commuting generators.} For $s\in S\setminus S(w)$ denote
  \begin{equation}
    N(s) = \{t\in S(w) : m(s,t)=\infty\}.
  \end{equation}
  If $N(s)$ contains at least two distinct elements $r,t$, then for
  $D=\langle s,t\rangle$ the shortest element of $DwD$ contains the
  generator $r$, and does not commute with $s$, so $DwD$ is
  non-degenerate. If $N(s)$ is empty, then $s$ commutes with all other
  generators, which contradicts irreducibility.

  What remains is the case where for all $s\in S\setminus S(w)$ we
  have $N(s)=\{f(s)\}$ for some function $f\colon S\setminus S(w) \to
  S(w)$. But for any $s\in S(w)$ all generators in the set $\{s\}\cup
  f^{-1}(s)$ commute with its complement. This contradicts
  irreducibility, provided that $S(w)\ne \{s\}$, which is excluded by
  assuming that the situation (1) from the statement of the Lemma does
  not hold.
\end{proof}

\begin{proposition}
  \label{prop:not-all-gens-gamma}
  Let $(W,S)$ be an irreducible infinite right-angled Coxeter
  system. Then all elements of $W$ except $1$ and, in case where
  $W\cong \ZZ_2*\ZZ_2^k$, the generator of the $\ZZ_2$ free factor,
  lie in the same connected component of $\Gamma(W,S)$.
\end{proposition}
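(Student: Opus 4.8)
The plan is to obtain the proposition by assembling Lemmas~\ref{lem:all-gens-gamma} and \ref{lem:not-all-gens-path} into an induction on the codimension $\abs{S}-\abs{S(w)}$ of the support. Working in the regime $\abs{S}\ge 3$ (inherited from Lemma~\ref{lem:all-gens-gamma}), I would first designate the component $C$ of $\Gamma(W,S)$ that, by Lemma~\ref{lem:all-gens-gamma}, contains every full-support element $w$ with $S(w)=S$; this $C$ is the component claimed in the statement. The goal then reduces to showing that every \emph{admissible} $w$ — meaning $w\neq 1$ and, in the $\ZZ_2*\ZZ_2^k$ case, $w$ not the free generator — lies in $C$. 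For elements with $S(w)=S$ this is immediate, so the content is to connect the admissible elements of smaller support to $C$.

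The inductive step is supplied directly by Lemma~\ref{lem:not-all-gens-path}. Given an admissible $w$ with $S(w)\neq S$, situation (1) of that lemma is ruled out by admissibility, so situation (2) holds: there is $s\in S\setminus S(w)$ with an edge of $\Gamma(W,S)$ joining $w$ to $ws$. Since $s\notin S(w)$ one has $\abs{ws}=\abs{w}+1$ and $S(ws)=S(w)\cup\{s\}$, so the support strictly grows. The key bookkeeping point I would verify is that $ws$ is again admissible: its support has size $\abs{S(w)}+1\ge 2$, hence $ws$ is neither $1$ nor the free generator, both of which have support of size at most $1$. Thus the hypotheses of Lemma~\ref{lem:not-all-gens-path} persist and the step can be repeated.

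Iterating, I would build a path $w=w_0,w_1,w_2,\dots$ in $\Gamma(W,S)$ with $\abs{S(w_{i+1})}=\abs{S(w_i)}+1$ at each stage, each step furnished by the lemma. Because the supports strictly increase while bounded by $\abs{S}$, after at most $\abs{S}-\abs{S(w)}$ steps the path terminates at some $w_N$ with $S(w_N)=S$, which lies in $C$ by Lemma~\ref{lem:all-gens-gamma}; as the entire path lies in $\Gamma(W,S)$, this forces $w\in C$, completing the induction. The whole argument is little more than this assembly, so the only real care is in the exceptional cases: one checks (as above) that neither $1$ nor the free generator can reappear as an interior vertex of the path, and one notes that the standing hypothesis $\abs{S}\ge 3$ enters solely through Lemma~\ref{lem:all-gens-gamma}, the residual case $\abs{S}=2$ being the infinite dihedral group $\ZZ_2*\ZZ_2$, a degenerate member of the $\ZZ_2*\ZZ_2^k$ family that must be disposed of separately. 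The genuine difficulty thus resides not in this proposition but in the two lemmas that feed it.
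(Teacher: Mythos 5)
Your proof is correct and follows essentially the same route as the paper's: Lemma~\ref{lem:all-gens-gamma} pins down the component containing all full-support elements, and Lemma~\ref{lem:not-all-gens-path} is applied inductively to walk any admissible element up to full support. The extra bookkeeping you supply (strict growth of $S(w)$ along the path, persistence of admissibility, and the caveat that $\abs{S}\geq 3$ is needed since $\Gamma(\ZZ_2*\ZZ_2,S)$ has no edges) is all sound and merely makes explicit what the paper leaves to the reader.
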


\begin{proof}
We already know by Lemma \ref{lem:all-gens-gamma} that all elements
$w\in W$ with $S(w)=S$ lie in the same connected component of
$\Gamma(W,S)$. It remains to show that for $w$ as required in the
statement with $S(w)\ne S$ we may find a path in $\Gamma(W,S)$ from
$w$ to some element $v$ with $S(v)=S$. But this can be done by
inductively applying Lemma~\ref{lem:not-all-gens-path}.
\end{proof}

\section{Hecke-von Neumann algebras}
\label{sec:hecke-algebras-1}



Let $(W,S)$ be a Coxeter system, and let $q$ be a positive real
number. 
By \cite[Proposition 19.1.1]{Davis2008} there exists a unique
$*$-algebra $\CC_q[W]$ with basis $\{\tilde T_w : w\in W\}$, such
that for any $s\in S$ and $w \in W$
\begin{equation}
  \label{eq:def-hecke-product}
  \begin{split}
    \tilde T_s \tilde T_w & =
    \begin{cases}
      \tilde T_{sw} & \text{if $\abs{sw}>\abs{w}$,}\\
      q\tilde T_{sw} + (q-1)\tilde T_w & \text{otherwise};
    \end{cases}\\
  \tilde T_w^* & = \tilde T_{w^{-1}}.
\end{split}
\end{equation}
It is called the \emph{Hecke algebra} of $W$ associated to $q$. It
satisfies also the analogue of the first of formulas
\eqref{eq:def-hecke-product} with the order of $s$ and $w$ reversed.


If $q$ is an integer, this algebra can be interpreted as an algebra
generated by averaging operators on a certain space of functions
associated to a regular building of type $(W,S)$ and thickness $q$
(for details consult \cite{Parkinson2006}). When $q=1$, we get the
group algebra of $W$.


Denote $T_w = q^{-\abs{w}/2} \tilde T_w$. These normalized elements
satisfy for all $s\in S$ and $w\in W$ the identity
\begin{equation}
  \label{eq:normalized-hecke-product}
  T_sT_w =
  \begin{cases}
    T_{sw} & \text{if $\abs{sw}>\abs{w}$} \\
    T_{sw}+ p T_w & \text{otherwise}, 
  \end{cases}
\end{equation}
where
\begin{equation}
  \label{eq:def-r_s}
  p = \frac{q-1}{q^{1/2}}.
\end{equation}


The map $j\colon \CC_q[W]\to \CC_{q^{-1}}[W]$ defined as the linear
extension of
\begin{equation}
  j(T_w) = (-1)^{\abs{w}} T_w
\end{equation}
satisfies $j(T_sT_w)=j(T_s)j(T_w)$, and hence is an isomorphism of
$*$-algebras.


Now, consider the Hilbert space $\ell^2(W)$ of square-summable complex
functions on $W$. Using the embedding of $\CC_q[W]$ into $\ell^2(W)$
given by $T_w\mapsto \delta_w$ we may uniquely extend the left and
right actions of $\CC_q[W]$ on itself to bounded $*$-repre\-sen\-ta\-tions
on $\ell^2(W)$. We will identify $\CC_q[W]$ with the subspace of
$\B(\ell^2(W))$ consisting of left multiplication operators. Also, for
$T\in\CC_q[W]$, the corresponding right multiplication operator on
$\ell^2(W)$ will be denoted by $T^r$. The \emph{Hecke-von Neumann}
algebra $\Nq(W)$ is the von Neumann algebra generated by $\CC_q[W]$ in
$\B(\ell^2(W))$. The right-hand variant of $\Nq(W)$, generated by the
operators $T^r$ with $T\in\CC_q[W]$ will be denoted by $\Nq^r(W)$.  It
follows from \cite[Proposition 19.2.1]{Davis2008} that the commutant
of $\Nq$ is $\Nq^r$ and vice versa. The main object of study in this
paper is the center of the Hecke-von Neumann algebra $\Nq(W)$.

As in the case of the group von Neumann algebra, to any element
$T\in\Nq(W)$ we may associate its \emph{symbol}
$T\delta_1\in\ell^2(W)$. If the symbol of $T$ is $0$, then we have
\begin{equation}
  T\delta_w = TT_w^r\delta_1 = T_w^rT\delta_1 = 0,
\end{equation}
and thus $T=0$, so the mapping $T\mapsto T\delta_1$ is injective. If
$\xi\in\ell^2(W)$ is a symbol of an operator in $\Nq(W)$, it will be
denoted by $T(\xi)$. The same reasoning applies to $\Nq^r(W)$. In this
case the operator with symbol $\xi$ will be dented by
$T^r(\xi)$. 

Recall that there is an isomorphism $j:
\CC_q[W]\to\CC_{q^{-1}}[W]$. Through the natural embedding of
$\CC_q[W]$ it extends to an isometry of $\ell^2(W)$. Therefore, it
also extends to an isometric isomorphism of the Hecke-von Neumann
algebras $\Nq(W)$ and $\mathcal{N}_{q^{-1}}(W)$.

\section{Description of the center}
\label{sec:symb-elem-cent}

Let $(W,S)$ be a right-angled Coxeter system. Take $q>0$ and  let
\begin{equation}
  p=\frac{q-1}{q^{1/2}}.
\end{equation}
We may assume that $q\leq1$, since the case of
$q>1$ can be reduced to the latter using the isomorphism
$j$. Furthermore, we may assume that $(W,S)$ is irreducible. Indeed,
if there is a decomposition $(W,S)=(W_T\times W_U, T\cup U)$, then we
have a decomposition of the corresponding Hecke algebra into an
algebraic tensor product,
\begin{equation}
  \CC_q[W]\isom \CC_q[W_T]\otimes \CC_q[W_U],
\end{equation}
resulting in a tensor product decomposition of the Hecke-von Neumann
algebra
\begin{equation}
  \Nq(W)\isom\Nq(W_T)\otimes \Nq(W_U).
\end{equation}
By \cite[Corollary 11.2.17]{Kadison1986}
the center of a tensor product of von Neumann algebras is the tensor
product of their centers, and so the general case can be reduced to
the irreducible one.

We will
begin by describing the conditions satisfied by the symbol of an
operator in $\Nq(W)$ commuting with a single $T_s$.

\begin{lemma}
  \label{lem:commute-with-generator}
  Let $s\in S$ and $T(\xi)\in\Nq(W)$. Then $T(\xi)$ commutes with
  $T_s$ if and only if for all $w\in W$ such that
  $\abs{sws}=\abs{w}+2$ we have
  \begin{equation}\label{eq:commute-with-generator-conditions}
    \begin{split}
      \xi(sw) &=\xi(ws),\\
      \xi(sws)&=\xi(w)+p\xi(sw).
    \end{split}
  \end{equation}
\end{lemma}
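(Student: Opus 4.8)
The plan is to exploit the injectivity of the symbol map $T\mapsto T\delta_1$ on $\Nq(W)$: the operator $T(\xi)$ commutes with $T_s$ if and only if $T(\xi)T_s$ and $T_sT(\xi)$ carry the same symbol. So the first task is to express both symbols explicitly in terms of $\xi$. Using that $T(\xi)$ lies in $\Nq(W)=(\Nq^r(W))'$ and hence commutes with the right multiplication operator $T_s^r$, together with $T_s\delta_1=\delta_s$, I would compute
\[
  (T(\xi)T_s)\delta_1 = T(\xi)\delta_s = T_s^r\xi,
  \qquad
  (T_sT(\xi))\delta_1 = T_s\xi.
\]
Reading off the multiplication rule \eqref{eq:normalized-hecke-product} for left multiplication and its right-hand analogue, I would then evaluate these functions pointwise. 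Writing $[\,P\,]$ for $1$ or $0$ according as the statement $P$ holds, the outcome is the pair of formulas
\[
  (T_s\xi)(v) = \xi(sv) + p\,\xi(v)\,[\,s\in D_L(v)\,],
  \qquad
  (T_s^r\xi)(v) = \xi(vs) + p\,\xi(v)\,[\,s\in D_R(v)\,],
\]
where I keep track of which of $sv,vs$ is longer, using that in a right-angled group length always changes by exactly one. Equality of the two symbols is thus equivalent to the single family of identities
\[
  \xi(sv) + p\,\xi(v)\,[\,s\in D_L(v)\,] = \xi(vs) + p\,\xi(v)\,[\,s\in D_R(v)\,],
  \qquad v\in W. \qquad(\star)
\]

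The second task is to organize $W$ into the orbits $\{w,sw,ws,sws\}$ under left and right multiplication by $s$, choosing $w$ of minimal length in each orbit, so that $s\notin D_L(w)$ and $s\notin D_R(w)$. By the Folding condition of Theorem~\ref{thm:folding-exchange}, applied with $t=s$, such a minimal $w$ either satisfies $sws=w$ (so $s$ commutes with $w$ and the orbit has two elements) or has $sws$ reduced, i.e.\ $\abs{sws}=\abs{w}+2$ (so the orbit has four elements). The key step is to substitute each orbit representative for $v$ in $(\star)$ and record which constraints survive. In the commuting case both indicators agree and $sv=vs$, so $(\star)$ holds automatically and imposes nothing. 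In the four-element case, $v=w$ gives $\xi(sw)=\xi(ws)$; $v=sw$ gives $\xi(sws)=\xi(w)+p\,\xi(sw)$; and the choices $v=ws$ and $v=sws$ reduce, using the first identity, to these same two equations. This is precisely \eqref{eq:commute-with-generator-conditions}.

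Finally, I would observe that the elements $w$ with $\abs{sws}=\abs{w}+2$ are exactly the minimal representatives of the four-element orbits, so ranging over them accounts for every instance of $(\star)$ without repetition, which yields the stated equivalence. The only delicate points are bookkeeping: verifying the two pointwise multiplication formulas and checking the orbit dichotomy via the Folding condition. The main conceptual step — and the one I would take most care over — is the orbit analysis showing that the four a priori distinct instances of $(\star)$ collapse onto the two relations in \eqref{eq:commute-with-generator-conditions}, while the degenerate (commuting) orbits contribute no condition at all.
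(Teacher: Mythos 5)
Your proposal is correct and follows essentially the same route as the paper: both reduce commutation with $T_s$ to the equality of symbols $T_s\xi = T_s^r\xi$ (using that $T(\xi)$ commutes with $T_s^r\in\Nq^r=\Nq'$), and both then decompose $W$ into the double cosets $\langle s\rangle w\langle s\rangle$, distinguishing the two-element (commuting) case, where no condition arises, from the four-element case, where the constraints collapse to the two stated relations. The only cosmetic difference is that the paper handles the two implications separately (the forward one via inner products, the backward one by noting the coset subspaces are invariant and leaving the verification as ``a direct calculation''), whereas you package everything into the single pointwise identity $(\star)$; the content is the same.
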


\begin{proof}
  We start by assuming that $T(\xi)$ commutes with $T_s$. If
  $\abs{sws}=\abs{w}+2$, then $\abs{sw}=\abs{ws}=\abs{w}+1$ and we
  have
  \begin{equation}
    \label{eq:commute-with-gen-1}
    \begin{split}
      \xi(ws) & = \langle T(\xi)\delta_1, T^r_s\delta_w \rangle =
      \langle T^r_sT(\xi)\delta_1,\delta_w\rangle = \langle
      T(\xi)T^r_s\delta_1,\delta_w \rangle = \\
      & = \langle T(\xi)T_s\delta_1,\delta_w\rangle = \langle
      T_sT(\xi)\delta_1,\delta_w\rangle = \langle T(\xi)\delta_1,
      T_s\delta_w\rangle = \xi(sw)\\
    \end{split}
  \end{equation}
  and
  \begin{equation}
    \label{eq:commute-with-gen-2}
    \begin{split}
      \xi(sws) & = \langle T(\xi),T_sT_s^r\delta_w\rangle = \langle
      T(\xi)T_sT^r_s\delta_1, \delta_w\rangle = \\
      &= \langle T(\xi)(\delta_1 + p\delta_s),\delta_w\rangle =
      \xi(w) + p\xi(sw)
    \end{split}
  \end{equation}
  
  Now, assume that for any $w\in W$ with $\abs{sws}=\abs{w}+2$
  conditions \eqref{eq:commute-with-generator-conditions} hold. It
  suffices to show that $T_sT(\xi)$ and $T(\xi)T_s$ have the same symbol,
  i.e.\ that 
  \begin{equation}
    \label{eq:commute-with-gen-3}
    T_s\xi = T_sT(\xi)\delta_1 = T(\xi)T_s\delta_1 = T^r_s\xi.
  \end{equation}
  The group $W$ has a decomposition into a disjoint sum of double
  cosets $\langle s\rangle w \langle s \rangle$, which induces a
  decomposition of $\ell^2(W)$ into a direct sum of subspaces
  invariant under both $T_s$ and $T^r_s$. Therefore, we just need to
  check whether equation~\eqref{eq:commute-with-gen-3} holds for the
  restriction of $\xi$ to any such double coset.

  Let $\{w,sw,ws,sws\}$ be a double coset of $\langle s \rangle$. It
  has a unique shortest element, which we assume to be $w$, and its
  cardinality is either 4 or 2. In the latter case $sw=ws$ and
  $sws=w$. A direct calculation shows that in both cases the
  restriction of $\xi$ satisfies \eqref{eq:commute-with-gen-3}.
\end{proof}

Now, we find the restrictions on the symbol of an operator in $\Nq(W)$
commuting with two elements $T_s$ and $T_t$.

\begin{proposition}
  \label{thm:centralizer-of-d-infty}
  Let $(W,S)$ be a right-angled Coxeter system. Suppose that $q\leq 1$
  and $s,t\in S$ satisfy $m(s,t)=\infty$. Consider an operator
  $T(\xi)\in\Nq(W)$ which commutes with both $T_s$ and $T_t$. Then the
  restriction of its symbol $\xi$ to any non-degenerate double coset
  $DwD$ of $D=\langle s,t\rangle$ with shortest element $w$ is given
  by the formula
  \begin{equation}
    \label{eq:centr-elt-reg-dbl-coset}
    \xi(dwd') = \xi(w)q^{(\abs{dwd'}-\abs{w})/2}.
  \end{equation}
\end{proposition}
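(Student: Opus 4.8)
The plan is to restrict everything to the subspace $\ell^2(DwD)\subseteq \ell^2(W)$, which is invariant under left and right multiplication by $T_s$ and $T_t$, and then to solve the resulting recurrence. By Lemma~\ref{lem:commute-with-generator}, commutation of $T(\xi)$ with $T_s$ and $T_t$ is equivalent to the identities \eqref{eq:commute-with-generator-conditions} for $r\in\{s,t\}$; written for a generic $v$ with $\abs{rvr}=\abs{v}+2$ these read
\begin{equation}
  \xi(rv)=\xi(vr), \qquad \xi(rvr)=\xi(v)+p\,\xi(rv).
\end{equation}
I would first record that the claimed function solves both: if $\abs{rvr}=\abs{v}+2$ then $\abs{rv}=\abs{vr}=\abs{v}+1$, so the first identity is immediate from \eqref{eq:centr-elt-reg-dbl-coset}, and the second collapses to the scalar identity $q^{(\abs{v}-\abs{w})/2}\cdot q = q^{(\abs{v}-\abs{w})/2}\,(1+p\,q^{1/2})$, which holds because $p\,q^{1/2}=q-1$. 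It then remains to prove uniqueness of the $\ell^2$ solution with prescribed value at $w$.

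Next I would use the first identity to reduce to a one-variable problem. It says that $\xi$ is unchanged when the outermost letter of a reduced representative $dwd'$ is carried from the far left to the far right (or back). Since $w$ is the shortest element of a \emph{non-degenerate} coset, neither $s$ nor $t$ lies in $D_L(w)$ or $D_R(w)$, so all the length additions $\abs{dwd'}=\abs{d}+\abs{w}+\abs{d'}$ hold and each such rotation is admissible. Carrying letters around in this way should transform any representative with $\abs{d}+\abs{d'}=m$ into any other with the same $m$, so that $\xi$ restricted to $DwD$ depends only on the distance $m=\abs{dwd'}-\abs{w}$, and I may write $\xi(dwd')=x_m$. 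With $\xi$ reduced to a function of $m$, the second identity, applied to any $v$ at distance $m-1$ with $rvr$ at distance $m+1$, becomes the scalar second-order recurrence
\begin{equation}
  x_{m+1}=x_{m-1}+p\,x_m, \qquad m\ge 1,
\end{equation}
with $x_0=\xi(w)$. Using $p=(q-1)/q^{1/2}$ and $p^2+4=(q+1)^2/q$, the characteristic equation $\lambda^2-p\lambda-1=0$ has roots $\lambda=q^{1/2}$ and $\lambda=-q^{-1/2}$, so $x_m=A\,q^{m/2}+B\,(-1)^m q^{-m/2}$.

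Finally I would invoke square-summability. The number of elements of $DwD$ at distance $m$ grows (linearly) with $m$, so finiteness of $\sum_m (\#\text{sphere}_m)\,\abs{x_m}^2$ forces the coefficient $B$ of the growing mode $q^{-m/2}$ to vanish once $q<1$, while the decaying mode $q^{m/2}$ is summable. Hence $x_m=A\,q^{m/2}$, and $m=0$ gives $A=\xi(w)$, which is exactly \eqref{eq:centr-elt-reg-dbl-coset}. I expect the main obstacle to be the connectivity claim in the second paragraph: the local relations alone leave the common sphere value $x_1$ (equivalently $\xi(sw)=\xi(ws)$) completely undetermined, so the argument genuinely needs both the rotation step — together with a careful check that the relevant descent hypotheses for \eqref{eq:commute-with-generator-conditions} hold at every stage, in particular where a rotated letter meets the letters of $w$, which is precisely where non-degeneracy is used — and the $\ell^2$ condition, to discard the solution growing like $q^{-m/2}$.
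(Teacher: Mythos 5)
Your overall architecture (restrict to one double coset, reduce to a function of the distance $m=\abs{dwd'}-\abs{w}$, solve the recurrence, use $\ell^2$ to kill the growing mode) matches the paper's, and the last two steps are fine. The gap is exactly where you suspected: the claim that the rotation relations $\xi(rv)=\xi(vr)$ connect all elements of $DwD$ at a fixed distance is false. The rotation move is conjugation of $u=dwd'$ by its outermost letter, so it can only move within a conjugacy class, and it gets stuck whenever the outermost left and right letters coincide. Concretely, at distance $1$ the first identity only gives $\xi(sw)=\xi(ws)$ and $\xi(tw)=\xi(wt)$ — it never identifies $\xi(sw)$ with $\xi(tw)$; at distance $2$ the elements $sws$ and $twt$ admit no admissible rotation at all (e.g.\ writing $sws=s\cdot(ws)$, the required hypothesis $\abs{s(ws)s}=\abs{ws}+2$ fails since $s(ws)s=sw$), and the remaining six elements split into the two disjoint orbits $\{stw,tws,wst\}$ and $\{tsw,swt,wts\}$. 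So your reduction to a single scalar sequence $(x_m)$ is unjustified, and the $\ell^2$ step cannot repair it, since it is applied only after that reduction. The paper closes precisely this hole with a different mechanism: it follows the \emph{second} relation along infinite arrow-paths $v_{i+1}=r_iv_ir_i$, telescopes $\xi(v_{i+1})=\xi(v_i)+p\,\xi(r_iv_i)$, and uses $\xi(v_k)\to0$ (from $\xi\in\ell^2(W)$) to get $\xi(v_0)=-p\sum_{i\ge0}\lambda_i$ with $\lambda_i=\xi(r_iv_i)$; comparing these series for different starting points yields the missing equalities between the rotation-orbits on each sphere. That limiting argument is the essential idea absent from your proposal.

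A secondary error: from $s,t\notin D_L(w)\cup D_R(w)$ you conclude that $\abs{dwd'}=\abs{d}+\abs{w}+\abs{d'}$ always holds, but non-degeneracy only excludes that $w$ commutes with \emph{both} generators. If $w$ commutes with, say, $t$ but not $s$ (still a non-degenerate coset), then $twt=w$ and additivity fails; the parametrization of $DwD$ by $D\times D$ is no longer injective and some of your rotations are genuinely inadmissible rather than merely needing a sign check. The paper treats this as a separate case, working with the quotient of the $D\times D$ picture and checking that outside one quarter of the diagram the graph distance still reflects word length. Both issues need to be addressed before your argument goes through.
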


\begin{proof}
  There are two kinds of non-degenerate double cosets of $D$, those
  in which the shortest element does not commute with both generators
  of $D$, and those where it commutes with exactly one of them. We
  will consider these two cases separately.

  \begin{figure}[htb]
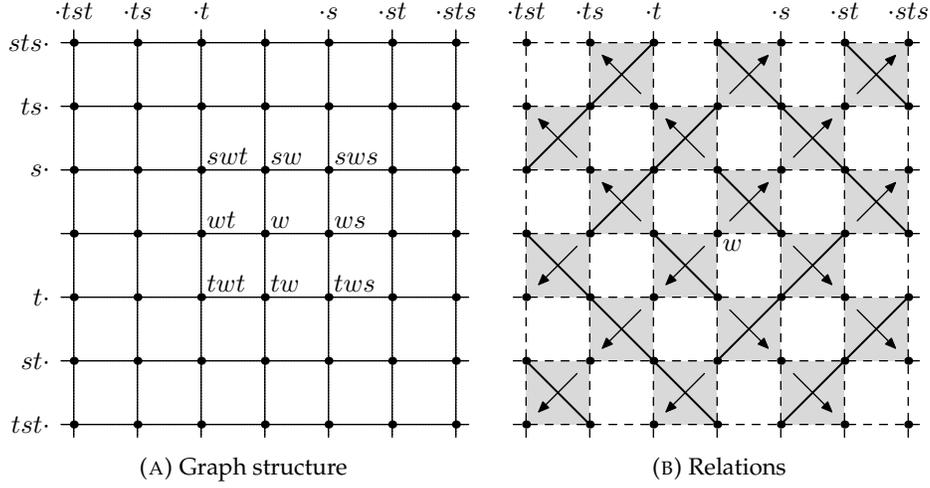

    \centering
    \begin{subfigure}{.5\linewidth}
      \centering
      \includegraphics{figs.1}
      \subcaption{Graph structure}
      \label{fig:graph-type-0}
    \end{subfigure}%
    \begin{subfigure}{.5\linewidth}
      \centering
      \includegraphics{figs.2}
      \subcaption{Relations}
      \label{fig:relations-case-0}
    \end{subfigure}
    \caption{Graph structure and relations between the values of the
      symbol $\xi$ of a central element of $\Nq(W)$ on a double coset
      $DwD$ for $w$ non-commuting with both generators of $D$}
  \end{figure}

  If $w$ does not commute with both generators of $D$, the mapping
  $D\times D\to DwD$ given by $(d,d')\mapsto d^{-1}wd'$ is
  bijective. Otherwise we would have $w=dwd'$ for some
  $(d,d')\ne(1,1)$, which is impossible, as no cancellation may occur
  in $dwd'$. This map induces the structure of the Cayley graph of $D\times
  D$ on $DwD$, whose fragment is presented in
  Fig.~\ref{fig:graph-type-0}.
  An important observation is that the path length of this graph
  faithfully reproduces word lengths of elements of $DwD$, by which we
  mean that for $v\in DwD$ its length is the sum of $\abs{w}$ and the
  distance between $v$ and $w$ in the graph.  In particular,
  Fig.~\ref{fig:relations-case-0} shows quadruples of elements of
  $DwD$ for which Lemma~\ref{lem:commute-with-generator} gives
  relations on the values of $\xi$. Every gray square corresponds to a
  single application of the Lemma to a quadruple $v,vr,rv,rvr$ with
  $v\in DwD$ and $r\in \{s,t\}$. The arrow is pointing from the
  shortest element $v$ to the longest one, $rvr$, and corresponds to
  the relation
  \begin{equation}
    \xi(rvr) = \xi(v)+p\xi(rv),
  \end{equation}
  while diagonal lines arising from the relations
  \begin{equation}
    \xi(rv)=\xi(vr)
  \end{equation}
  are joining elements on which the values of $\xi$ are equal.
  
  Now, take some $v_0\in DwD$ in which an arrow starts, and define
  inductively $v_{i+1}$ as the element at which the arrow starting in
  $v_i$ ends. Also, let $\lambda_i$ be the value of $\xi$ on the
  elements joined by the line crossing the arrow from $v_i$ to
  $v_{i+1}$. 
  \begin{figure}[htb]
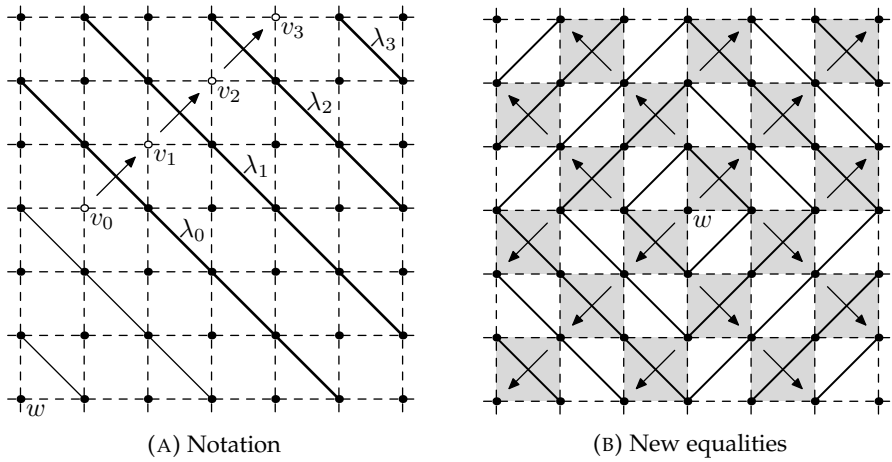

    \centering
    \begin{subfigure}{.5\linewidth}
      \centering
      \includegraphics{figs.3}
      \subcaption{Notation}
      \label{fig:path-in-graph}
    \end{subfigure}%
    \begin{subfigure}{.5\linewidth}
      \centering
      \includegraphics{figs.4}
      \subcaption{New equalities}
      \label{fig:more-relations}
    \end{subfigure}
    \caption{Additional relations for $w$ non-commuting with both
      generators of $D$.}
  \end{figure}
  This is illustrated in Fig.~\ref{fig:path-in-graph}.   
  We have, by passing to infinity,
  \begin{equation}
    \xi(v_0) = \xi(v_k) - p\sum_{i=0}^{k-1}\lambda_i = -p \sum_{i=0}^{\infty}\lambda_i.
  \end{equation}
  In particular, the value $\xi(v_0)$ depends only on the $\lambda_k$,
  which imposes new equalities, corresponding to adding to
  Fig.~\ref{fig:relations-case-0} diagonal lines as in
  Fig.~\ref{fig:more-relations}. It follows that $\xi(dwd')$ depends
  only on the distance between $dwd'$ and $w$ in the graph, i.e.\ on
  $\abs{dwd'}-\abs{w}$.

  Now, suppose that $w$ commutes with one of the generators of $D$,
  say $t$. The vertices of the graph in Fig. \ref{fig:graph-type-0}
  still correspond to elements of $DwD$, but no longer in a one-to-one
  manner. However, with the exception of the quarter containing $twt$
  (the lower left quarter in Fig.~\ref{fig:relations-case-1}),
  \begin{figure}[htb]
    \centering
    \includegraphics{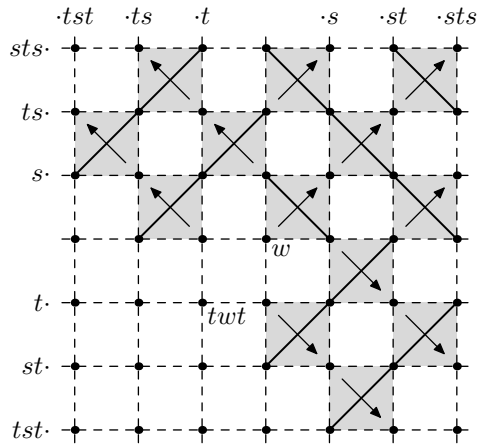}
    \caption{Relations between the values of the symbol $\xi$ of a
      central element of $\Nq(W)$ on a double coset $DwD$ for $w$
      commuting with one of the generators of $D$.}
    \label{fig:relations-case-1}
  \end{figure}
  the length of elements of the double coset is faithfully reflected
  by the path length in the graph. Indeed, if a pair of generators in
  $dwd'$ is canceled, one of them has to lie in $d$, while the other
  in $d'$. It follows that they must be equal to $t$, and be adjacent
  to $w$.

  As in the previous case we obtain a system of relations for values
  of $\xi$ on $DwD$, represented in Fig.~\ref{fig:relations-case-1},
  only this time distinct vertices may correspond to the same element
  of $DwD$, so there are additional equalities not indicated in the
  diagram. The same reasoning as before leads to the conclusion that
  $\xi(dwd')$ depends only on $\abs{dwd'}-\abs{w}$.

  We have therefore shown that for a non-degenerate double coset
  $DwD$, we have
  \begin{equation}
    \xi(dwd')=f(\abs{dwd'}-\abs{w})
  \end{equation}
  for some function $f\colon\NN\to \CC$. By
  Lemma~\ref{lem:commute-with-generator}, it satisfies the recurrence
  relation
  \begin{equation}
    f(n+2)=pf(n+1)+f(n).
  \end{equation}
  Its general solution is $f(n) = \alpha q^{n/2} + \beta
  (-q)^{-n/2}$. Since $q\leq 1$ and $\xi\in \ell^2(W)$, we have
  $\beta=0$, which ends the proof.
\end{proof}

We are now ready to describe the center of the Hecke-von Neumann
algebra of an irreducible right-angled Coxeter system with at least
$3$ generators. For $2$ generators we get the group $\ZZ_2*\ZZ_2$,
and it suffices to apply Lemma~\ref{lem:commute-with-generator}
to describe the large center of its Hecke-von Neumann algebra.

\begin{theorem}
  \label{thm:center-q-const}
  Suppose that $(W,S)$ is an irreducible right-angled Coxeter system
  with $\abs{S}\geq 3$. Then the Hecke-von Neumann algebra $\Nq(W)$
  is a factor if and only if
  \begin{equation}
    q \in [\rho, \rho^{-1}],
  \end{equation}
  where $\rho$ is the convergence radius of $W(t)$, the spherical
  growth series of $W$. Moreover, for $q$ outside this interval,
  $\Nq(W)$ is a direct sum of a factor and $\CC$.
\end{theorem}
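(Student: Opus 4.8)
The plan is to compute the center $Z(\Nq(W))=\Nq(W)\cap\Nq^r(W)$ through the symbols of its elements. After the reductions already made I may assume $q\le 1$ and $(W,S)$ irreducible; since an irreducible right-angled system with $\abs{S}\ge 2$ cannot consist of pairwise commuting generators, there is a pair $s,t$ with $m(s,t)=\infty$, so $W$ is infinite and both Proposition~\ref{thm:centralizer-of-d-infty} and Proposition~\ref{prop:not-all-gens-gamma} apply. An operator $T(\xi)\in\Nq(W)$ is central precisely when it commutes with every generator $T_s$, so I would first feed this into Lemma~\ref{lem:commute-with-generator} (for each $s$) and Proposition~\ref{thm:centralizer-of-d-infty} (for each non-commuting pair). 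The conclusion is that on every non-degenerate double coset the function $v\mapsto\xi(v)q^{-\abs{v}/2}$ is constant; as the two endpoints of any edge of $\Gamma(W,S)$ lie in a common such double coset, this quantity is constant along edges of $\Gamma(W,S)$.

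Next I would globalize this using the connectivity results. By Proposition~\ref{prop:not-all-gens-gamma} all of $W$ except $1$ and, in the case $W\isom\ZZ_2*\ZZ_2^k$, the free generator $r$, lies in a single component $C$, so $\xi(v)=A\,q^{\abs{v}/2}$ on $C$ for one scalar $A$. The two exceptional vertices I would treat by hand: applying Lemma~\ref{lem:commute-with-generator} with $w=r$ and a generator $u$ with $m(r,u)=\infty$ expresses $\xi(r)$ through the values $\xi(ur),\xi(uru)\in C$ and forces $\xi(r)=A\,q^{1/2}$, so $r$ in fact obeys the same geometric formula; on the other hand no instance of the Lemma (whose four entries $w,sw,ws,sws$ are forced to be distinct and of positive length) ever involves $\xi(1)$, so $\xi(1)$ is unconstrained. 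Hence every central symbol has the form $\xi=(\xi(1)-A)\delta_1+A\,\Omega$ with $\Omega=\sum_{v\in W}q^{\abs{v}/2}\delta_v$, i.e.\ the candidate central symbols span the two-dimensional space $\CC\delta_1+\CC\Omega$.

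The dichotomy now comes from the $\ell^2$ condition: $\norm{\Omega}^2=\sum_{v}q^{\abs{v}}=W(q)$, which is finite exactly for $q<\rho$ and diverges for $q\ge\rho$. If $q\ge\rho$, then $A=0$ is forced, only $\delta_1$---the symbol of $I$---remains, and $\Nq(W)$ is a factor. If $q<\rho$, the decisive step is to realize $\Omega$ as a genuine bounded central operator rather than a mere $\ell^2$ vector satisfying the relations. For this I would verify by a direct computation that $T_s\Omega=T_s^r\Omega=q^{1/2}\Omega$ for every $s$ (the key identity being $q^{-1/2}+p=q^{1/2}$). Thus $\CC\Omega$ is invariant under both the left and the right regular actions, and since the $T_s$ and $T_s^r$ are self-adjoint so is its orthogonal complement; the rank-one projection $P$ onto $\CC\Omega$ therefore commutes with $\Nq(W)$ and with $\Nq^r(W)$, whence $P\in\Nq(W)\cap\Nq^r(W)=Z(\Nq(W))$, with symbol $\Omega/W(q)$.

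Finally I would assemble the structure statement. For $q\ge\rho$ the center is $\CC I$, so $\Nq(W)$ is a factor; for $q<\rho$ it is $\CC I\oplus\CC P$ with $P$ a rank-one central projection, so $P\Nq(W)=P\Nq(W)P=\CC P\isom\CC$ while $(1-P)\Nq(W)$ has center $\CC(1-P)$ and is a factor, giving $\Nq(W)\isom(\text{factor})\oplus\CC$. Undoing the initial normalization through the isomorphism $j\colon\Nq(W)\to\mathcal{N}_{q^{-1}}(W)$ covers $q>\rho^{-1}$ and turns the threshold $q<\rho$ into the complement of $[\rho,\rho^{-1}]$, as claimed. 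The main obstacle is exactly this sufficiency step for $q<\rho$: the symbol calculus only tells us which $\ell^2$ vectors \emph{could} be central symbols, and the real content is the eigenvector argument that upgrades $\Omega$ to an honest projection in $\Nq(W)$; a lesser but necessary point of care is the bookkeeping at the exceptional vertices $1$ and $r$, confirming that $\xi(1)$ is free while $r$ lies on the geometric curve.
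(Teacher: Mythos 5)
Your argument is correct and follows essentially the same route as the paper: the same reduction to $q\le 1$, the same use of Lemma~\ref{lem:commute-with-generator}, Proposition~\ref{thm:centralizer-of-d-infty} and the connectivity of $\Gamma(W,S)$ (including the by-hand treatment of the exceptional generator) to pin the central symbols down to $\CC\delta_1+\CC\zeta$, and the same dichotomy via $\norm{\zeta}_2^2=W(q)$. The only, harmless, difference is in how $\zeta$ is promoted to a central element for $q<\rho$: you observe that the rank-one projection onto the common eigenvector $\zeta$ of the self-adjoint generators $T_s$ and $T^r_s$ commutes with both algebras and hence lies in $\Nq(W)\cap\Nq^r(W)$, whereas the paper constructs the bounded extension of $\xi\mapsto T^r(\xi)\zeta$ and shows it lies in $\Nq^r(W)'=\Nq(W)$; both steps rest on the identity $T^r_s\zeta=q^{1/2}\zeta$ and the commutation theorem $\Nq(W)'=\Nq^r(W)$.
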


\begin{proof}
  It is sufficient to consider the case where $q\leq 1$. Suppose that
  $T(\xi)\in Z(\Nq(W))$. If $v,w \in W$ are connected by an edge in
  the graph $\Gamma(W,S)$, they lie in the same non-degenerate double
  coset. Hence, by Proposition~\ref{thm:centralizer-of-d-infty},
  \begin{equation}
    \xi(w)q^{-\abs{w}/2} = \xi(v)q^{-\abs{v}/2}.
  \end{equation}
  By Proposition~\ref{prop:not-all-gens-gamma}, all nontrivial
  elements of $W$, except possibly one, lie in the same connected
  component $K$ of $\Gamma(W,S)$, and thus there exists
  $\lambda\in\CC$ such that for $w\in K$
  \begin{equation}
    \label{eq:formula-for-xi}
    \xi(w) = \lambda q^{\abs{w}/2}.
  \end{equation}
  If $K \ne W\setminus\{1\}$, the additional element outside $K$ is a
  generator $s\in S$. Since $\abs{S} \geq 3$ and $(W,S)$ is
  irreducible, there is another generator $t\in S$ not commuting with
  $s$, and by Lemma~\ref{lem:commute-with-generator},
  \begin{equation}
    \xi(s)=\xi(tst)-p\xi(ts)=\lambda q^{3/2} - (q-1)q^{-1/2}\lambda q
      = \lambda q^{1/2},
  \end{equation}
  so~\eqref{eq:formula-for-xi} extends to $W\setminus\{1\}$. It
  follows that $\xi$ is a linear combination of $\delta_1$ and $\zeta$
  defined by 
  \begin{equation}
    \zeta(w) = q^{\abs{w}/2}.
  \end{equation}
  But the $\ell^2$-norm of $\zeta$ can be expressed in terms of the
  growth series of $W$, namely
  \begin{equation}
    \norm{\zeta}^2_2 = \sum_{w\in W} q^{\abs{w}} = W(q).
  \end{equation}
  This is finite only for $q < \rho$, so for $q\geq \rho$ the
  Hecke-von Neumann algebra $\Nq$ is a factor.

  To finish the proof we will show that for $q<\rho$, the vector
  $\zeta$ is a symbol of an operator in $\Nq(W)$ which is proportional
  to a one-dimensional projection. This has been done in \cite[Lemma
  19.2.5]{Davis2008}, and we will reproduce the argument. First,
  observe that
  \begin{equation}
    \label{eq:generator-on-central-symbol}
    \begin{split}
      T_s^r\zeta = & \sum_{w\in W} q^{\abs{w}/2} T_s^r\delta_w =
      \sum_{\abs{w}<\abs{ws}} q^{\abs{w}/2}
      T_s^r(\delta_w+q^{1/2}\delta_{ws}) \\
      = & \sum_{\abs{w}<\abs{ws}} q^{\abs{w}/2} (\delta_{ws}+q^{1/2}\delta_w
      + pq^{1/2}\delta_{ws}) \\
      = & \sum_{\abs{w}<\abs{ws}} q^{(\abs{w}+1)/2}\delta_w +
      q^{\abs{w}/2+1} \delta_{ws} = q^{1/2}\zeta,
    \end{split}
  \end{equation}
  and therefore for finitely supported $\xi=\sum a_w\delta_w \in \ell^2(W)$ we obtain
  \begin{equation}
    \norm{T^r(\xi) \zeta}_2 = \norm{\sum a_wq^{\abs{w}/2}\zeta}_2 \leq
    \abs{\sum a_wq^{\abs{w}/2}}\norm{\zeta}_2 \leq W(q)\norm{\xi}_2\norm{\zeta}_2.
  \end{equation}
  This means that the map $\xi\mapsto T^r(\xi)\zeta$ extends to a
  bounded operator $Q$ on $\ell^2(W)$. Take $T^r(\eta)\in
  \Nq^r=\Nq'$. For finitely supported $\xi\in\ell^2(W)$ we have
  \begin{equation}
    T^r(\eta)Q\xi = T^r(\eta)T^r(\xi)\zeta = T^r(T^r(\eta)\xi)\zeta = QT^r(\eta)\xi,
  \end{equation}
  and thus $Q\in \Nq''=\Nq$. But the symbol of $Q$ is $\zeta$. It
  remains to observe that $T(\zeta)$ is self-adjoint and by
  \eqref{eq:generator-on-central-symbol}
  \begin{equation}
    T(\zeta)\zeta = \sum_{w\in W} q^{\abs{w}/2}T^r_w \zeta =
    \sum_{w\in W} q^{\abs{w}}\zeta = W(q)\zeta,
  \end{equation}
  so $W(q)^{-1}T(\zeta)$ is an orthogonal projection onto the space
  spanned by $\zeta$.
\end{proof}

\section{Final remarks}
\label{sec:final-remarks}

Adam Skalski brought to our attention the fact that the results of
this article are related to those of~\cite{Dykema}, where
decompositions of free products of certain von Neumann algebras are
studied.

Let $(M,\phi)$ be a pair consisting of a von Neumann algebra $A$ and a
faithful normal state $\phi$. Furthermore, let $\{ M_i\}$ be a family
of unital subalgebras of $M$. A \emph{reduced word} is then a product
$x_1\cdots x_k$ where $x_i\in M_{j(i)}$ and $j(i+1)\ne j(i)$ for all
$i < k$. The family $\{M_i\}$ is \emph{free} if any reduced word
$x_1\cdots x_k$ with $\phi(x_i)=0$ for all $i$ satisfies
$\phi(x_1\cdots x_k)=0$. Now, if $(M_i,\phi_i)$ are von Neumann algebras endowed with faithful
normal states, their free product is the unique von Neumann algebra
$M=\bigstar_i (M_i,\phi_i)$ with a faithful normal state $\phi$ and embeddings
$(M_i,\phi_i)\hookrightarrow (M,\phi)$ such that the embedded copies of $M_i$ are
free and together generate $M$. For more details on free products
consult \cite{Voiculescu2002}.

The Hecke-von Neumann algebra $\Nq(W)$ comes equipped with a faithful
normal state $\phi(T)=\langle T\delta_1, \delta_1 \rangle$ and we will
implicitly assume that all Hecke-von Neumann algebras are endowed with
this natural state. Also, when considering commutative algebras
$L^\infty(\Omega,\mu)$ for a measure space $(\Omega,\mu)$, we assume
that they are endowed with the integral with respect to $\mu$, which
is a state when $\mu(\Omega)=1$.

\begin{lemma}
  \label{lem:decomposition-into-free-product}
  Suppose that $W$ decomposes into a free product of special subgroups
  $W_i$. Then $\Nq(W)$ is the free product of the Hecke-von Neumann
  algebras $\Nq(W_i)$ associated to the free factors $W_i$ of $W$.
\end{lemma}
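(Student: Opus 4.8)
The plan is to verify directly that $\Nq(W)$, equipped with its natural state $\phi$, satisfies the defining properties of the free product $\bigstar_i(\Nq(W_i),\phi_i)$: that the $\Nq(W_i)$ embed as state-preserving normal subalgebras which together generate $\Nq(W)$ and are free. Since the free product is the unique such algebra, this suffices. Throughout I would use the fact that a decomposition of $W$ into a free product of special subgroups $W_i$ means precisely that $m(s,t)=\infty$ whenever $s$ and $t$ lie in distinct factors. Consequently every $w\in W$ has a unique \emph{normal form} $w=u_1\cdots u_k$ with each $u_j$ a nontrivial element of some factor $W_{i(j)}$ and $i(j)\neq i(j+1)$, and moreover $\abs{w}=\abs{u_1}+\cdots+\abs{u_k}$; indeed, by the discussion following Theorem~\ref{thm:folding-exchange}, a cancellation in a concatenation of reduced expressions would force two equal generators with everything between them commuting with that generator, which is impossible across factors.

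First I would construct the embeddings. Because the word length in a special subgroup agrees with that in $W$, formula~\eqref{eq:normalized-hecke-product} shows that the linear span of $\set{T_w : w\in W_i}$ is a $*$-subalgebra of $\CC_q[W]$ isomorphic to $\CC_q[W_i]$; I let $\Nq(W_i)$ also denote the von Neumann algebra it generates inside $\B(\ell^2(W))$. The subspace $\ell^2(W_i)\subseteq\ell^2(W)$ is invariant under the $*$-closed generating set $\set{T_w : w\in W_i}$, hence under the whole generated algebra; restricting to it identifies this copy, state-preservingly, with the abstract Hecke--von Neumann algebra of $W_i$ (the symbol of such a $T$ is then supported on $W_i$, and $\phi(T)=\langle T\delta_1,\delta_1\rangle$ is the value of the symbol at $1$). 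Normality is immediate, and since each $T_s$ with $s\in S$ lies in the factor containing $s$, these subalgebras generate $\Nq(W)$.

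The heart of the argument is freeness, for which I would first establish a shifting identity. Let $T\in\Nq(W_i)$ with symbol $\eta=T\delta_1$ supported on $W_i$, and let $w\in W$ whose normal form does not begin with a syllable in $W_i$ (in particular $w=1$ is allowed). Using $\delta_w=T_w^r\delta_1$ and the commutation of left and right multiplications, $T\delta_w=T_w^r\eta=\sum_{v\in W_i}\eta(v)\,T_w^r\delta_v$. For each $v\in W_i$ one has $D_R(v)\subseteq S_i$, while $D_L(w)$ is contained in the generating set of the first syllable of $w$, which is disjoint from $S_i$; hence $\abs{vw}=\abs{v}+\abs{w}$ and $T_w^r\delta_v=\delta_{vw}$, so
\begin{equation}
  T\delta_w=\sum_{v\in W_i}\eta(v)\,\delta_{vw}.
\end{equation}
In particular, if $\phi(T)=\eta(1)=0$, then $T\delta_w$ is supported on elements whose normal form begins with a nontrivial syllable in $W_i$.

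With this in hand, freeness follows by induction. Take a reduced word $x_1\cdots x_k$ with $x_j\in\Nq(W_{i(j)})$, $i(j)\neq i(j+1)$, and $\phi(x_j)=0$. I claim that $x_j\cdots x_k\delta_1$ is supported on elements whose normal form begins with a syllable in $W_{i(j)}$: for $j=k$ this is the identity above applied to $w=1$, and for the inductive step the support of $x_{j+1}\cdots x_k\delta_1$ consists of elements beginning in factor $i(j+1)\neq i(j)$, so the identity applies termwise and, since $\phi(x_j)=0$, pushes the support to elements beginning in factor $i(j)$. Taking $j=1$ shows $x_1\cdots x_k\delta_1$ is supported away from $1$, whence $\phi(x_1\cdots x_k)=\langle x_1\cdots x_k\delta_1,\delta_1\rangle=0$, which is exactly freeness. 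The main obstacle is the bookkeeping in the shifting identity: one must handle symbols that are genuine infinite $\ell^2$-vectors (justifying the interchange of the bounded operator $T_w^r$ with the infinite sum, and noting that "being supported on a given set" is preserved under $\ell^2$-limits) and track how normal forms behave under left multiplication across factors. Once this is in place, the freeness induction is routine.
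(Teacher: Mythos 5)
Your argument is correct, but it proves freeness by a different mechanism than the paper. The paper's proof is an approximation argument at the level of the algebras: an element of $\ker\phi_U$ or $\ker\phi_V$ is approximated by linear combinations of $T_w$ with $w\neq 1$ in the corresponding factor, and then one uses the algebraic identity $T_{w_1}\cdots T_{w_k}=T_{w_1\cdots w_k}$ for an alternating word (valid because lengths add across free factors), together with $w_1\cdots w_k\neq 1$, to conclude $\phi_W(T_1\cdots T_k)=0$ in the limit. Your proof instead works at the level of symbols in $\ell^2(W)$: you establish the shifting identity $T\delta_w=\sum_{v\in W_i}\eta(v)\delta_{vw}$ and track supports of the vectors $x_j\cdots x_k\delta_1$ by induction, showing they stay on elements whose normal form begins in the factor $W_{i(j)}$, hence away from $1$. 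This is the classical ``alternating words shift the support'' proof of freeness for group von Neumann algebras of free products, adapted to the Hecke deformation. What your route buys is that it sidesteps the one delicate point the paper leaves implicit, namely why the state of a product of limits is the limit of the states of the products (which in the paper's approach requires something like Kaplansky density plus joint strong continuity of multiplication on bounded sets); in your version all limits are taken in $\ell^2(W)$, where boundedness of the operators and closedness of the subspace of vectors supported on a fixed subset of $W$ make the interchanges routine, exactly as you note. The price is slightly more bookkeeping with normal forms and descent sets, all of which you handle correctly (in particular the key facts that $D_R(v)\subseteq S_i$ for $v\in W_i$ and $D_L(w)\subseteq S_{i(1)}$ for $w$ beginning with a syllable in $W_{i(1)}$, so that no cancellation occurs in $vw$). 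Both proofs also rely on the same preliminary observations about state-preserving embeddings of the $\Nq(W_i)$, which you verify via invariance of $\ell^2(W_i)$; the paper simply asserts this.
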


\begin{proof}
  It is enough to consider a free product $W=U*V$. Denote the
  states on the corresponding Hecke-von Neumann algebras by $\phi_W,
  \phi_U$, and $\phi_V$. Clearly, $\Nq(W)$ is generated by embedded
  copies of $\Nq(U)$ and $\Nq(V)$, on which $\phi_W$ restricts to
  $\phi_U$ and $\phi_V$, respectively.

  Now, a nontrivial reduced word $T=T_1\cdots T_k$ in elements of $\ker \phi_U$
  and $\ker \phi_V$ can be approximated by linear combinations of nontrivial
  reduced words of the form $T(w_1)\cdots T(w_k)$, where $w_i$ are
  nontrivial elements of $U$ and $V$. But such a word is equal to
  $T(w_1\cdots w_k)$, where $w_1\cdots w_k$ is a nontrivial reduced
  word in $W$, so $\phi_W(T)=0$.
\end{proof}

In \cite{Dykema}, among others, the free products of finite-dimensional
commutative von Neumann algebras were studied. We may thus apply these
results to Hecke-von Neumann algebras of right-angled Coxeter groups
which decompose into free products of abelian groups $\ZZ_2^k$. Let us
first study the Hecke-von Neumann algebras of these groups in more
detail.

\begin{lemma}
  \label{lem:hvn-z2}
  There exists an isomorphism of the Hecke-von Neumann algebra
  $\Nq(\ZZ_2^k)$ and the commutative algebra
  $L^\infty(\ZZ_2^k,\mu_k)$, where
  \begin{equation}\label{eq:commutative-hvn-measure}
    \mu_k(w)=\frac{q^{\abs{w}}}{(q+1)^k},
  \end{equation}
  preserving their natural states.
\end{lemma}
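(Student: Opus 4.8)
The plan is to exhibit an explicit isomorphism by identifying the generators of $\Nq(\ZZ_2^k)$ with coordinate functions on $\ZZ_2^k$. First I would observe that $\ZZ_2^k$ is a finite abelian group, so $\CC_q[\ZZ_2^k]$ is a finite-dimensional commutative $*$-algebra: the generators $T_{s_1},\ldots,T_{s_k}$ commute (the $s_i$ pairwise commute, so no reduction occurs and $T_{s_i}T_{s_j}=T_{s_is_j}=T_{s_j}T_{s_i}$), and each satisfies a single quadratic relation coming from \eqref{eq:normalized-hecke-product}, namely $T_{s_i}^2 = T_1 + p\,T_{s_i}$. Being finite-dimensional, $\CC_q[\ZZ_2^k]$ already equals $\Nq(\ZZ_2^k)$, so it is a commutative von Neumann algebra, hence isomorphic to $L^\infty$ of its spectrum, which is a $2^k$-point set; the content is to pin down the spectrum and the measure.

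The key computation is to diagonalize a single generator $T_s$. Since $T_s$ is self-adjoint with $T_s^2 - p\,T_s - 1 = 0$, its two eigenvalues are the roots $\lambda_\pm = (p \pm \sqrt{p^2+4})/2$ of $x^2 - px - 1$. With $p = (q-1)/q^{1/2}$ one checks $p^2 + 4 = (q+1)^2/q$, so $\sqrt{p^2+4} = (q+1)/q^{1/2}$ and the roots are $\lambda_+ = q^{1/2}$ and $\lambda_- = -q^{-1/2}$. The eigenprojections $e_\pm = (T_s - \lambda_\mp)/(\lambda_\pm - \lambda_\mp)$ give the spectral decomposition. I would then compute the state $\phi(e_\pm) = \langle e_\pm \delta_1, \delta_1\rangle$; using $T_s\delta_1 = \delta_s$ and expanding, the weights come out to $\phi(e_+) = q/(q+1)$ and $\phi(e_-) = 1/(q+1)$, which are exactly the single-generator masses $\mu_1$ assigns, since $\mu_1(\{s\}) = q/(q+1)$ and $\mu_1(\{1\}) = 1/(q+1)$.

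For the full algebra I would take the tensor/product structure: because the $T_{s_i}$ are commuting self-adjoint generators, their joint spectral projections are products $\prod_i e^{(i)}_{\epsilon_i}$ indexed by sign vectors $\epsilon \in \{+,-\}^k$, giving an isomorphism $\Nq(\ZZ_2^k) \isom L^\infty$ of a $2^k$-point space. The matching of this index set with $\ZZ_2^k$ itself is via $w \leftrightarrow \epsilon$ where $\epsilon_i = -$ exactly when $s_i \in S(w)$, under which $\abs{w}$ equals the number of minus signs. Since the state factors as a product over the $k$ coordinates (freeness is not needed here—the generators genuinely commute, so $\phi(\prod e^{(i)}_{\epsilon_i}) = \prod \phi(e^{(i)}_{\epsilon_i})$ by a direct expansion on $\delta_1$), the resulting measure of the point $\epsilon$ is $\prod_i \phi(e^{(i)}_{\epsilon_i}) = q^{\#\{i:\epsilon_i=-\}}/(q+1)^k = q^{\abs{w}}/(q+1)^k$, which is precisely \eqref{eq:commutative-hvn-measure}. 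This establishes the state-preserving isomorphism.

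The main obstacle I expect is verifying that the state multiplies over the coordinates, i.e.\ that $\phi(e^{(1)}_{\epsilon_1}\cdots e^{(k)}_{\epsilon_k}) = \prod_i \phi(e^{(i)}_{\epsilon_i})$. This requires knowing how products of the $T_w$ pair against $\delta_1$; the clean way is to note that $\langle T_w \delta_1, \delta_1\rangle = 0$ for $w \ne 1$ (the symbol of $T_w$ is $\delta_w$) and that in $\ZZ_2^k$ a product $T_{s_{i_1}}\cdots T_{s_{i_m}}$ with distinct indices equals $T_{s_{i_1}\cdots s_{i_m}}$ with no lower-order correction, so the only contribution to $\phi$ comes from the constant term when all the quadratic relations are used to reduce. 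Expanding each $e^{(i)}_{\epsilon_i}$ as a linear combination of $T_1$ and $T_{s_i}$ and multiplying out, the cross terms vanish under $\phi$ because they are genuine $T_w$ with $w \ne 1$, leaving exactly the product of the one-variable answers. Once this orthogonality-of-symbols bookkeeping is in place, the identification and the measure formula follow without further difficulty.
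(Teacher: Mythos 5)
Your argument is essentially the paper's: the paper reduces to $\Nq(\ZZ_2)$ via the tensor-product decomposition $\Nq(\ZZ_2^k)\isom\Nq(\ZZ_2)^{\otimes k}$, exhibits the same two minimal projections with traces $1/(q+1)$ and $q/(q+1)$, and then tensors up, while you perform the $k$-fold diagonalization in one step and check multiplicativity of the state by hand --- a correct and equivalent substitute. One small slip: with $e_\pm=(T_s-\lambda_\mp)/(\lambda_\pm-\lambda_\mp)$ and $\phi(T_s)=0$ one gets $\phi(e_+)=1/(q+1)$ and $\phi(e_-)=q/(q+1)$, i.e.\ the two values are swapped relative to what you wrote; your final matching ($\epsilon_i=-$ exactly when $s_i\in S(w)$) is then consistent with the corrected values and does yield \eqref{eq:commutative-hvn-measure}, so the conclusion stands.
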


\begin{proof}
  We have $\Nq(\ZZ_2^k) \isom \Nq(\ZZ_2)^{\otimes k}$, so it is enough
  to consider the group $\ZZ_2$. Commutativity is clear, and $\dim
  \Nq(\ZZ_2)=2$, so we just need to exhibit a decomposition of the
  unit of $\Nq(\ZZ_2)$ into two projections. A simple calculation
  shows that this decomposition is
  \begin{equation}
    1 = \left(\frac{\sqrt{q}}{q+1} T_s + \frac{1}{q+1}\right) + \left( \frac{-\sqrt{q}}{q+1} T_s + \frac{q}{q+1}\right),
  \end{equation}
  and thus $\Nq(\ZZ_2)$ is isomorphic to $\CC^2$ with the state
  \begin{equation}
    (z_1,z_2)\mapsto \frac{z_1+qz_2}{1+q},
  \end{equation}
  i.e.\ to $L^\infty(\ZZ_2,\mu_1)$. Finally, we see that
  $L^\infty(\ZZ_2,\mu_1)^{\otimes k} \isom L^{\infty}(\ZZ_2^k, \mu_k)$.
\end{proof}

Now we will recall Theorem 2.3 and Proposition 2.4 of \cite{Dykema},
reformulated to better suit our usage. For $r>1$ by $L(F_r)$ we will
denote the interpolated free group factor endowed with its standard
state. Also, $L(F_1)$ will stand for the group von Neumann algebra
$L(\ZZ)$. Thus, we have $L(F_r) \fp L(F_s)=L(F_{r+s})$.

\begin{theorem}[\cite{Dykema}]
  \label{thm:dykema-free-products} 
  Let $X$ and $Y$ be finite (possibly empty) sets
  endowed with measures $\mu_X$ and $\mu_Y$ of full support, satisfying
  $\mu_X(X)\leq 1$ and $\mu_Y(Y) \leq 1$. Consider the von Neumann
  algebras $M$ and $N$ defined as follows
  \begin{enumerate}
  \item if $\mu_X(X)=\mu_Y(Y)=1$, the cardinalities
    $\abs{X},\abs{Y}\geq2$, and $\abs{X}+\abs{Y} \geq 5$, we set
    \begin{equation*}
      M = L^\infty(X,\mu_X) \qquad\text{and}\qquad N=L^\infty(Y,\mu_Y),
    \end{equation*}
  \item If $\mu_X(X)<1$ and $\mu_Y(Y)=1$, we set
    \begin{equation*}
      M = L(F_r)\oplus L^\infty(X,\mu_X) \qquad\text{and}\qquad N=L^\infty(Y,\mu_Y)
    \end{equation*}
    for some $r\geq1$, and endow $M$ with the state
    \begin{equation*}
      \phi_M(x,f) = (1-\mu_X(X))\phi_{L(F_r)}(x) + \int_X f\,d\mu_X,
    \end{equation*}
  \item if $\mu_X(X)<1$ and $\mu_Y(Y)<1$, we put
    \begin{equation*}
      M = L(F_r)\oplus L^\infty(X,\mu_X) \qquad\text{and}\qquad N= L(F_t)\oplus L^\infty(Y,\mu_Y)
    \end{equation*}
    for some $r,t\geq 1$, endow $M$ with the state $\phi_M$ defined above, and $N$ with the
    analogous state
    \begin{equation*}
      \phi_N(x,f) = (1-\mu_Y(Y))\phi_{L(F_r)}(x) + \int_Y f\,d\mu_Y.
    \end{equation*}
  \end{enumerate}
  With these definitions, we have
  \begin{equation}
    M\fp N \isom L(F_s)\oplus L^\infty(Z,\nu),
  \end{equation}
  where $s\geq 1$,
  \begin{equation}
    Z=\{(x,y)\in X\times Y : \mu_X(x)+\mu_Y(y) >1\},
  \end{equation}
  and $\nu(x,y)=\mu_X(x)+\mu_Y(y)-1$. 
\end{theorem}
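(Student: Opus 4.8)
Since Theorem~\ref{thm:dykema-free-products} is a repackaging of Theorem~2.3 and Proposition~2.4 of~\cite{Dykema}, the plan is not so much to reprove it from scratch as to recover it from the structure theory of free products of hyperfinite and finite-dimensional von Neumann algebras and then match the bookkeeping to our normalization. The guiding principle is that a free product of the algebras in the statement splits canonically into a diffuse part, which turns out to be an interpolated free group factor $L(F_s)$, and an atomic part $L^\infty(Z,\nu)$; the whole argument consists of (i) locating the atoms, (ii) identifying the diffuse remainder, and (iii) computing $s$.

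For the atoms, I would work inside $M\fp N$ with its state $\phi$ and examine meets of minimal projections. Write $p_x\in M$ and $q_y\in N$ for the minimal projections supported on the atoms $x\in X$ and $y\in Y$, so that $\phi(p_x)=\mu_X(x)$ and $\phi(q_y)=\mu_Y(y)$. Since $M$ and $N$ are free, $p_x$ and $q_y$ are free projections, and the basic free-probability computation of the distribution of $p_xq_yp_x$ (a free convolution of two Bernoulli laws) shows that $p_x\wedge q_y$ is nonzero precisely when $\phi(p_x)+\phi(q_y)>1$, in which case $\phi(p_x\wedge q_y)=\phi(p_x)+\phi(q_y)-1$. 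These meets are mutually orthogonal, each is an atom of $M\fp N$, and together they account for the summand $L^\infty(Z,\nu)$ with exactly the stated $Z$ and $\nu$. The key point that no further atoms appear is that the only possible source of atoms is the finite-dimensional corners of $M$ and $N$, since the free group factor summands $L(F_r)$, $L(F_t)$ are diffuse.

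To identify the remaining diffuse part as $L(F_s)$ and pin down $s$, I would invoke Dykema's free dimension $\mathrm{fdim}$, normalized so that $\mathrm{fdim}(L(F_r))=r$, so that the defect $\mathrm{fdim}-1$ rescales as $t^{-2}$ under compression by a projection of trace $t$, and so that it is additive under free products, $\mathrm{fdim}(M\fp N)=\mathrm{fdim}(M)+\mathrm{fdim}(N)$. With this normalization one has $\mathrm{fdim}(A\oplus B)=1+\alpha^2(\mathrm{fdim}(A)-1)+\beta^2(\mathrm{fdim}(B)-1)$ for block traces $\alpha,\beta$; in particular a finite-dimensional abelian algebra with atom weights $w_i$ has $\mathrm{fdim}=1-\sum_i w_i^2$. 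Applying additivity to
\begin{equation*}
  M\fp N\isom L(F_s)\oplus L^\infty(Z,\nu)
\end{equation*}
and solving for $s$ yields the closed expression
\begin{equation*}
  s = 1 + \frac{\mathrm{fdim}(M)+\mathrm{fdim}(N)-1+\sum_{(x,y)\in Z}\nu(x,y)^2}{\bigl(1-\nu(Z)\bigr)^2},
\end{equation*}
into which the free dimensions of $M$ and $N$ are substituted.

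The main obstacle, and the part that forces the hypotheses in case~(1), is verifying that $s\geq1$—i.e.\ that the numerator above is nonnegative—and that the diffuse corner is a genuine interpolated free group factor rather than something degenerate. The inequality can fail when the algebras are too small: for instance $\CC^2\fp\CC^2\isom L(D_\infty)$ is not even a factor, which is exactly why case~(1) rules out $\abs{X}=\abs{Y}=2$ by demanding $\abs{X}+\abs{Y}\geq5$ together with $\abs{X},\abs{Y}\geq2$. I would treat the three cases uniformly by absorbing the free group factor summands $L(F_r)$, $L(F_t)$ into the free-dimension computation exactly as in Dykema's argument—they add diffuse mass and hence strictly increase $s$ without creating atoms—so that cases~(2) and~(3) reduce to the same positivity estimate with more room to spare. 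The genuinely delicate bookkeeping is thus confined to the abelian case~(1), where the combinatorial inequality on the atom weights must be checked by hand.
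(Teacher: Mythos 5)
The paper contains no proof of this statement: Theorem~\ref{thm:dykema-free-products} is imported verbatim (merely reformulated) from Theorem~2.3 and Proposition~2.4 of \cite{Dykema}, so there is no in-paper argument to measure your proposal against. Judged on its own terms, your sketch is a faithful high-level reconstruction of how Dykema's result is actually proved. The atomic part is located exactly as you describe: for free projections $p_x, q_y$ the distribution of $p_xq_yp_x$ is the free multiplicative analogue of two Bernoulli laws, whose atom at $1$ has mass $\max(0,\mu_X(x)+\mu_Y(y)-1)$ and is carried by $p_x\wedge q_y$; these meets are central and exhaust the atoms, giving $L^\infty(Z,\nu)$. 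Your free-dimension bookkeeping is also consistent with Dykema's conventions, and the closed formula you derive for $s$ is the correct one. Your explanation of the hypothesis $\abs{X}+\abs{Y}\geq 5$ via the degenerate two-projection algebra $\CC^2\fp\CC^2$ (hyperfinite, with diffuse center) is on target.

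Two points in your outline carry essentially all of the difficulty and are treated as routine. First, the assertion that the non-atomic corner is an interpolated free group factor \emph{at all} is the real content of Dykema's theorem; it cannot be extracted from the $\mathrm{fdim}$ calculus, since free dimension is only defined, and only known to be additive under $\fp$, for algebras already shown to lie in Dykema's class. In \cite{Dykema} this is established by an induction built on explicit matrix decompositions of two-projection algebras, Voiculescu's computation $L^\infty[0,1]\fp L^\infty[0,1]\isom L(F_2)$, compression formulas, and the notion of standard embeddings; the $\mathrm{fdim}$ count only tells you \emph{which} $L(F_s)$ appears once that structure is in hand. Second, your claim that no atoms arise beyond the meets $p_x\wedge q_y$ (in particular that the $L(F_r)$ and $L(F_t)$ summands contribute none) is true but requires an argument about freeness with respect to diffuse subalgebras. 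Neither point invalidates your outline, but both would have to be filled in for a complete proof; as a description of the route taken in the cited source, your proposal is accurate.
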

In particular, under the assumptions of Theorem \ref{thm:dykema-free-products}, the free product $M\fp N$
is a factor if and only if
\begin{equation}
  \max_{x\in X}\mu_X(x) + \max_{y\in Y} \mu_Y(y) \leq 1.
\end{equation}
Now, consider the irreducible right-angled Coxeter group
\begin{equation}
  W=\ZZ_2^{k_1} * \ZZ_2^{k_2}*\cdots *\ZZ_2^{k_n}
\end{equation}
with $n\geq2$. If we
assume that $k_1 \geq 2$, we may inductively apply
Theorem~\ref{thm:dykema-free-products} to obtain the description of
$\Nq(W)$ as a direct sum of an interpolated free group factor and a
finite-dimensional commutative algebra:
\begin{equation}
  \Nq(W)\isom L(F_s) \oplus L^\infty(Z,\nu),
\end{equation}
where, using the notation of Lemma~\ref{lem:hvn-z2}, 
\begin{equation}
  Z = \left\{ (w_1,\ldots,w_n) \in \prod_{i=1}^n\ZZ_2^{k_i}
  : \sum_{i=1}^n\mu_{k_i}(w_i) > n -1\right\}.
\end{equation}
Suppose that $q\geq1$. We have
\begin{equation}
  \sum_{i=1}^n\mu_{k_i}(w_i) = \sum_{i=1}^n
  q^{\abs{w_i}-k_i} \left(\frac{q}{q+1} \right)^{k_i} \leq \frac{q}{q+1}\sum_{i=1}^n
  q^{\abs{w_i}-k_i},
\end{equation}
and if at least for one $i$ the inequality $\abs{w_i} < k_i$ holds,
this yields
\begin{equation}
   \sum_{i=1}^n\mu_{k_i}(w_i) \leq
   \frac{q}{q+1}\left(n-1+\frac{1}{q}\right) =
   n - 1 - \frac{n-2}{q+1} \leq n-1,
\end{equation}
since $n\geq2$. Therefore, depending on whether
\begin{equation}
  \sum_{i=1}^n \left(\frac{q}{q+1}\right)^{k_i} > n-1,
\end{equation}
the set $Z$ is either empty or contains exactly one element
$(w_1,\ldots, w_n)$, where $w_i$ is the unique longest element of
$\ZZ_2^{k_i}$. Calculating the exponential growth rate of $W$ by
considering the spherical growth series of its free factors would lead
to exactly the same condition. Hence,
Theorem~\ref{thm:dykema-free-products} allows to find the range of $q$
for which $\Nq(W)$ is a factor, additionally exhibiting it to be the
interpolated free group factor---but only in the case where $W$ is a
free product of finite right-angled Coxeter groups. Our result works
for arbitrary right-angled Coxeter groups, but gives less information
about the structure of the obtained factors.


\bibliographystyle{plain}
\bibliography{../../library}
\end{document}